\documentclass[a4paper,12pt]{article}
\usepackage[top=2.5cm,bottom=2.5cm,left=2.5cm,right=2.5cm]{geometry}
\usepackage{cite, amsmath, amssymb}
\usepackage{amssymb}
\usepackage{graphicx}
\newenvironment{proof}{{\noindent \it Proof.}}{\hfill $\blacksquare$\par}
\usepackage{latexsym}
\usepackage{graphicx,booktabs,multirow}
\usepackage{tikz}
\usetikzlibrary{decorations.pathreplacing}
\usetikzlibrary{intersections}
\usepackage{enumerate}
\usepackage{graphicx,booktabs,multirow}
\usepackage{appendix}
\usepackage{diagbox}
\newtheorem{theorem}{Theorem}[section]

\newtheorem{lemma}[theorem]{Lemma}


\begin{document}

\title{The spread of generalized reciprocal distance matrix}
\author{Hechao Liu$^1$, Yufei Huang$^{2,}$\thanks{Corresponding author}
 \\
{\small $^1$ School of Mathematical Sciences, South China Normal University,}\\ {\small Guangzhou, 510631, P. R. China}\\
 \small {\tt hechaoliu@m.scnu.edu.cn}\\
{\small $^2$ Department of Mathematics Teaching, Guangzhou Civil Aviation College,}\\ {\small Guangzhou, 510403, P. R. China}\\
 \small {\tt fayger@qq.com}\\
}
\date{}
\maketitle
\begin{abstract}
The generalized reciprocal distance matrix $RD_{\alpha}(G)$ was defined as $RD_{\alpha}(G)=\alpha RT(G)+(1-\alpha)RD(G),\quad 0\leq \alpha \leq 1.$
Let $\lambda_{1}(RD_{\alpha}(G))\geq \lambda_{2}(RD_{\alpha}(G))\geq \cdots \geq \lambda_{n}(RD_{\alpha}(G))$ be the eigenvalues of $RD_{\alpha}$ matrix of graphs $G$. Then the $RD_{\alpha}$-spread of graph $G$ can be defined as $S_{RD_{\alpha}}(G)=\lambda_{1}(RD_{\alpha}(G))-\lambda_{n}(RD_{\alpha}(G))$.
In this paper, we first obtain some sharp lower and upper bounds for the $RD_{\alpha}$-spread of graphs. Then we determine the lower bounds for the $RD_{\alpha}$-spread of bipartite graphs and graphs with given clique number.  At last, we give the $RD_{\alpha}$-spread of double star graphs.
Our results generalize the related results of the reciprocal distance matrix and reciprocal distance signless Laplacian matrix.
\end{abstract}

\noindent{\bf Keywords}: generalized reciprocal distance matrix, spread, bound
\hskip0.2cm

\noindent{\bf 2020 MSC}: 15A18, 05C50.
\maketitle

\makeatletter
\renewcommand\@makefnmark%
{\mbox{\textsuperscript{\normalfont\@thefnmark)}}}
\makeatother

 \baselineskip=0.30in

\baselineskip=0.30in

\section{Introduction}
\hskip 0.6cm
Let $G$ be a graph with vertex set $V(G)=\{v_{1},v_{2},\cdots,v_{n}\}$ and edge set $E(G)$.
The complement graph $\overline{G}$ is the graph with $V(\overline{G})=V(G)$ and $uv\in E(\overline{G})$ if and only if $uv\notin E(G)$.
Let $d_{G}(v_{i},v_{j})$ (or $d_{ij}$) be the distance between vertex $v_{i}$ and $v_{j}$ in $G$.
Denote by $RTr_{G}(v_{i})=\sum\limits_{v_{j}\in V(G)\setminus \{v_{i}\}}\frac{1}{d_{G}(v_{i},v_{j})}$
the vertex reciprocal transmission of vertex $v_{i}$ in $G$.
We call $G$ is a $k$-reciprocal transmission regular graph if $RTr_{G}(v_{i})=k$ for any $v_{i}\in V(G)$.
We usually omit the letter $G$ when it does not cause confusion. For example, we write $RTr_{G}(v_{i})$
as $RTr(v_{i})$ or $RTr_{i}$, write $\frac{1}{d_{G}(v_{i},v_{j})}$ as $\frac{1}{d(v_{i},v_{j})}$ or $\frac{1}{d_{ij}}$.

The reciprocal distance matrix $RD$ (or called the Harary matrix) \cite{pntm1993} is defined as
\begin{equation*}
(RD)_{ij}=
       \left\{
        \begin{array}{lc}
    \frac{1}{d(v_{i},v_{j})},\quad  \ i\neq j,\\
    0,\quad \ \  otherwise.
           \end{array}
        \right.
	\end{equation*}

Denote by diagonal matrix $RT(G)$ the vertex reciprocal transmission matrix whose $(i,i)$ element is $RTr_{G}(v_{i})$. Then the reciprocal distance signless Laplacian matrix is defined as \cite{abra2019}
$$RQ(G)=RT(G)+RD(G).$$

To achieve the convex linear combination of adjacent matrix $A(G)$ and degree diagonal matrix $D(G)$, Nikiforov proposed the so called $A_{\alpha}$ matrix \cite{niki2017}, which is defined as
$$A_{\alpha}(G)=\alpha D(G)+(1-\alpha)A(G),\quad 0\leq \alpha \leq 1.$$
For the recent research of $A_{\alpha}$ matrix, one can see \cite{fewe2022,lilf2022,lisu2021,liwa2022,lizh2022,yush2022}.

To achieve the convex linear combination of vertex reciprocal transmission matrix $RT(G)$ and reciprocal distance matrix $RD(G)$, Tian et al. proposed the so called $RD_{\alpha}$ matrix \cite{tccu2022}, which is defined as
$$RD_{\alpha}(G)=\alpha RT(G)+(1-\alpha)RD(G),\quad 0\leq \alpha \leq 1.$$
It is obvious that $RD_{0}(G)=RD(G)$, $RD_{1}(G)=RT(G)$ and $2RD_{\frac{1}{2}}(G)=RQ(G)$.
For the recent research of $D(G)$, $Q(G)$, $RD(G)$, $RQ(G)$ one can see \cite{linh2015,liws2021,lisx2021,pgab2020,suli2014,wawe2022,yory2017}.

In this paper, we use $\lambda(M)$ to denote the eigenvalue of matrix $M$, $\sigma(M)$ to denote the spectrum of matrix $M$, $S(M)$ to denote the spread of matrix $M$.

Let $\lambda_{1}(RD_{\alpha}(G))\geq \lambda_{2}(RD_{\alpha}(G))\geq \cdots \geq \lambda_{n}(RD_{\alpha}(G))$ be the eigenvalues of $RD_{\alpha}$ matrix of graphs $G$. Then the $RD_{\alpha}$-spread of graph $G$ can be defined as $S_{RD_{\alpha}}(G)=\lambda_{1}(RD_{\alpha}(G))-\lambda_{n}(RD_{\alpha}(G))$.
Since $RD_{\alpha}(G)$ is a non-negative and irreducible matrix, thus $\lambda_{1}(RD_{\alpha}(G))$ is a simple (with multiplicity 1) eigenvalue.

In this paper, we first obtain some sharp lower and upper bounds for the $RD_{\alpha}$-spread of graphs. Then we determine the lower bounds for the $RD_{\alpha}$-spread of bipartite graphs and graphs with given clique number.  At last, we determine the $RD_{\alpha}$-spread of double star graphs.

\section{Main results}
\hskip 0.6cm
The Frobenious norm of matrix $M=(m_{ij})_{n\times n}$ is defined as $\|M\|_{F}=\sqrt{\sum\limits_{i=1}^{n}\sum\limits_{j=1}^{n}|m_{i,j}|^{2}}$.
Let $\sigma(M)=\{\lambda_{1}(M),\lambda_{2}(M),\cdots,\lambda_{n}(M)\}$. Then
$\|M\|_{F}^{2}=\sum\limits_{i=1}^{n}\sum\limits_{j=1}^{n}|m_{i,j}|^{2}=tr(M^{2})=
\sum\limits_{i=1}^{n}|\lambda_{i}(M)|^{2}$.
In particular, $\|RD_{\alpha}\|_{F}^{2}=tr((RD_{\alpha})^{2})=\alpha^{2}\sum\limits_{i=1}^{n}(RTr_{G}(v_{i}))^{2}
+(1-\alpha)^{2}\sum\limits_{i\neq j}(\frac{1}{d_{ij}})^{2}$.

\begin{lemma}\label{l2-1}\cite{mirs1956}
Let $M$ be an n-square normal matrix. Then
$$ S(M)\leq \left( 2\|M\|_{F}^{2}-\frac{2}{n}(tr(M))^{2} \right)^{\frac{1}{2}},$$
with equality if and only if $\lambda_{2}(M)=\lambda_{3}(M)=\cdots=\lambda_{n-1}(M)=\frac{1}{2}(\lambda_{1}(M)+\lambda_{n}(M))$.
\end{lemma}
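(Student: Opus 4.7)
The plan is to exploit that a normal matrix is unitarily diagonalizable, so the Frobenius norm and trace can be expressed directly through the eigenvalues: $\|M\|_{F}^{2}=\sum_{i=1}^{n}|\lambda_{i}|^{2}$ and $tr(M)=\sum_{i=1}^{n}\lambda_{i}$. Writing $\bar{\lambda}=\frac{1}{n}\sum_{i=1}^{n}\lambda_{i}$, the inequality to prove becomes
$$(\lambda_{1}-\lambda_{n})^{2}\;\leq\;2\sum_{i=1}^{n}|\lambda_{i}|^{2}-\frac{2}{n}\Bigl|\sum_{i=1}^{n}\lambda_{i}\Bigr|^{2}\;=\;2\sum_{i=1}^{n}|\lambda_{i}-\bar{\lambda}|^{2},$$
so the statement is reduced to a purely scalar inequality about the spectrum. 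For the applications in the paper the matrix $RD_{\alpha}(G)$ is real symmetric, so one may work with real eigenvalues and absolute values collapse to squares.

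From here I would proceed in two elementary steps. First, I would drop the middle eigenvalues and use
$$\sum_{i=1}^{n}(\lambda_{i}-\bar{\lambda})^{2}\;\geq\;(\lambda_{1}-\bar{\lambda})^{2}+(\lambda_{n}-\bar{\lambda})^{2}.$$
Second, setting $a=\lambda_{1}-\bar{\lambda}\geq 0$ and $b=\bar{\lambda}-\lambda_{n}\geq 0$, so that $a+b=\lambda_{1}-\lambda_{n}=S(M)$, I would apply the basic identity $a^{2}+b^{2}\geq\frac{1}{2}(a+b)^{2}$. Combining the two steps yields $2\sum_{i=1}^{n}(\lambda_{i}-\bar{\lambda})^{2}\geq S(M)^{2}$, which is precisely the claimed bound after taking square roots.

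For the equality discussion, both inequalities in the chain must be saturated. Tightness in the first step forces $\lambda_{i}=\bar{\lambda}$ for every $i\in\{2,\dots,n-1\}$, while tightness in the second forces $a=b$, i.e.\ $\bar{\lambda}=\frac{1}{2}(\lambda_{1}+\lambda_{n})$. Combining these gives exactly $\lambda_{2}=\cdots=\lambda_{n-1}=\frac{1}{2}(\lambda_{1}+\lambda_{n})$, which matches the characterization stated. Conversely, plugging that spectral pattern back into the Frobenius and trace expressions verifies equality.

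There is no real obstacle; the only subtle point is confirming that normality is exactly what is needed for the identity $\|M\|_{F}^{2}=\sum|\lambda_{i}|^{2}$ (since general matrices only satisfy $\|M\|_{F}^{2}\geq\sum|\lambda_{i}|^{2}$ by Schur's inequality). Everything else is a one-line application of dropping non-negative terms and the AM--QM inequality $a^{2}+b^{2}\geq\frac{1}{2}(a+b)^{2}$.
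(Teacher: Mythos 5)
The paper does not prove this lemma at all---it is quoted verbatim from Mirsky's 1956 paper as a known result---so there is no internal proof to compare against. Your argument is a correct, self-contained derivation: for a normal $M$ one indeed has $\|M\|_{F}^{2}=\sum_{i}|\lambda_{i}|^{2}$ and $\operatorname{tr}(M)=\sum_{i}\lambda_{i}$, the right-hand side equals $2\sum_{i}|\lambda_{i}-\bar{\lambda}|^{2}$, and the chain ``drop the middle terms, then apply $a^{2}+b^{2}\geq\tfrac{1}{2}(a+b)^{2}$'' gives the bound, with the two equality conditions combining to exactly the stated characterization $\lambda_{2}=\cdots=\lambda_{n-1}=\tfrac{1}{2}(\lambda_{1}+\lambda_{n})$. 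Your restriction to real eigenvalues is harmless here (the paper only applies the lemma to the real symmetric matrix $RD_{\alpha}(G)$, and the ordering $\lambda_{1}\geq\cdots\geq\lambda_{n}$ in the statement already presupposes a real spectrum); if one wanted the full normal-matrix version with $S(M)=\max_{i,j}|\lambda_{i}-\lambda_{j}|$, the same two steps go through after replacing $a^{2}+b^{2}\geq\tfrac12(a+b)^{2}$ by the parallelogram-type inequality $|x-y|^{2}\leq 2|x|^{2}+2|y|^{2}$ applied to $x=\lambda_{1}-\bar{\lambda}$, $y=\lambda_{n}-\bar{\lambda}$.
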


By Lemma \ref{l2-1} and the definition of $RD_{\alpha}(G)$, we have the following results.
\begin{theorem}\label{t2-2}
Let $G$ be a connected graph with $|V(G)|=n\geq 3$ and Harary index $H(G)$. Then for $\alpha\in [0,1]$, we have
$$ S_{RD_{\alpha}}(G)\leq \left( 2\alpha^{2}\sum_{i=1}^{n}(RTr_{G}(v_{i}))^{2}+2(1-\alpha)^{2}\sum_{i\neq j}(\frac{1}{d_{ij}})^{2}-\frac{8}{n}\alpha^{2}H^{2}(G)  \right)^{\frac{1}{2}}, $$
with equality if and only if $\lambda_{2}(M)=\lambda_{3}(M)=\cdots=\lambda_{n-1}(M)=\frac{1}{2}(\lambda_{1}(M)+\lambda_{n}(M))$.
\end{theorem}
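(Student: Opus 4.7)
My plan is to apply Lemma \ref{l2-1} directly to $M=RD_{\alpha}(G)$. Since $RT(G)$ is a real diagonal matrix and $RD(G)$ is a real symmetric matrix, the convex combination $RD_{\alpha}(G)=\alpha RT(G)+(1-\alpha)RD(G)$ is itself real symmetric, hence normal, so the lemma applies. It then remains to evaluate the two quantities on the right-hand side of the lemma, namely $\|RD_{\alpha}\|_F^{2}$ and $tr(RD_{\alpha})$, in terms of the graph invariants appearing in the theorem.

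For the Frobenius norm, I would simply quote the identity already recorded in the paragraph preceding Lemma \ref{l2-1}, namely
\[
\|RD_{\alpha}\|_F^{2}=\alpha^{2}\sum_{i=1}^{n}(RTr_{G}(v_{i}))^{2}+(1-\alpha)^{2}\sum_{i\neq j}\Bigl(\frac{1}{d_{ij}}\Bigr)^{2},
\]
which follows because $RT(G)$ contributes only to the diagonal while $RD(G)$ contributes only to the off-diagonal, so the squared entries decouple.

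For the trace, the key observation is that $RD(G)$ has zero diagonal, hence
\[
tr(RD_{\alpha}(G))=\alpha\sum_{i=1}^{n}RTr_{G}(v_{i})=2\alpha H(G),
\]
using the defining identity $H(G)=\sum_{i<j}\frac{1}{d_{ij}}=\frac{1}{2}\sum_{i}RTr_{G}(v_{i})$ for the Harary index. Squaring gives $(tr(RD_{\alpha}(G)))^{2}=4\alpha^{2}H^{2}(G)$, which produces the $\frac{8}{n}\alpha^{2}H^{2}(G)$ term after multiplication by $\frac{2}{n}$.

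Plugging these two computations into Lemma \ref{l2-1} yields the claimed inequality, and the equality characterization is inherited verbatim from the equality clause of Lemma \ref{l2-1} applied to $M=RD_{\alpha}(G)$. There is no real obstacle here: the entire argument is a short substitution, and the only conceptual point to double-check is the trace identity linking $\sum_i RTr_G(v_i)$ with $2H(G)$, which is immediate from the definitions.
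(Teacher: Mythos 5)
Your proposal is correct and is exactly the argument the paper intends: the paper proves Theorem \ref{t2-2} by applying Lemma \ref{l2-1} to $M=RD_{\alpha}(G)$ together with the Frobenius-norm identity stated just before the lemma and the trace identity $tr(RD_{\alpha}(G))=2\alpha H(G)$. Your substitution, including the factor $\frac{2}{n}\cdot 4\alpha^{2}H^{2}(G)=\frac{8}{n}\alpha^{2}H^{2}(G)$ and the inherited equality condition, matches the paper's (essentially omitted) proof.
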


Since $RD_{\alpha}(G)$ is a positive semidefinite matrix if $\alpha\in [\frac{1}{2},1]$ (see Page 9 of \cite{tccu2022}), then $S_{RD_{\alpha}}(G)=\lambda_{1}(RD_{\alpha}(G))-\lambda_{n}(RD_{\alpha}(G))\leq \lambda_{1}(RD_{\alpha}(G))$, with equality if and only if $\lambda_{n}(RD_{\alpha}(G))=0$.

We can easily know $S_{RD_{\alpha}}(G)=(1-\alpha)S_{RD}(G)$ if $G$ is a reciprocal transmission regular graph. Thus, in the following, we suppose $G$ is not reciprocal transmission regular graph.
Similar to the proof of Lemma 4 of \cite{metr2021}, we have
\begin{theorem}\label{t2-3}
Let $G$ be a connected graph with $|V(G)|=n\geq 2$. Then for $\alpha\in [0,1)$, we have
$G$ has exactly two distinct $RD_{\alpha}$ eigenvalues if and only if $G\cong K_{n}$.
And it is easily to know that $\sigma(RD_{\alpha}(K_{n}))=\{n-1,(\alpha n-1)^{[n-1]}\}$ \rm(also see \cite{tccu2022}\rm).
\end{theorem}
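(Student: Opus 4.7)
The plan is to handle the two implications separately. The easy direction (that $K_n$ has the stated spectrum) is by direct computation: in $K_n$ every distance is $1$, so $RD(K_n)=J-I$ and $RT(K_n)=(n-1)I$, where $J$ is the all-ones matrix. Thus
\[
RD_{\alpha}(K_n)=\alpha(n-1)I+(1-\alpha)(J-I)=(\alpha n-1)I+(1-\alpha)J.
\]
Since $J$ has eigenvalues $n$ (simple, on the all-ones vector) and $0$ (with multiplicity $n-1$, on $\mathbf{1}^\perp$), one reads off the spectrum $\{n-1,\,(\alpha n-1)^{[n-1]}\}$, which has exactly two distinct values because $\alpha<1$ gives $\alpha n-1<n-1$.

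For the converse, suppose $G$ is connected on $n\geq 2$ vertices and $RD_\alpha(G)$ has exactly two distinct eigenvalues $\mu>\lambda$. Since $RD_\alpha(G)$ is entrywise nonnegative and irreducible, Perron--Frobenius forces $\mu=\lambda_1$ to be simple, so $\lambda$ has multiplicity $n-1$. The minimal polynomial is therefore $(t-\mu)(t-\lambda)$, and expanding gives the rank-one identity
\[
RD_\alpha(G)-\lambda I=(\mu-\lambda)\,xx^{T},
\]
where $x$ is the positive unit Perron eigenvector. Comparing off-diagonal entries yields $(1-\alpha)/d_{ij}=(\mu-\lambda)x_ix_j$ for all $i\neq j$, while the diagonals give $\alpha\,RTr_i=\lambda+(\mu-\lambda)x_i^{2}$.

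The main step is to pass from this algebraic structure to the combinatorial conclusion that every $d_{ij}=1$. Summing the off-diagonal identity over $j$ expresses $RTr_i$ in terms of $x_i$ and the row sum $S=\sum_j x_j$, and substituting into the diagonal equation produces a single quadratic for $x_i$ whose coefficients are independent of $i$; hence $x$ takes at most two distinct values on $V(G)$. The obstacle I expect to be the main one is ruling out the genuinely two-valued case: one has to show that if $x$ took two values then the rank-one form $(1-\alpha)/d_{ij}=(\mu-\lambda)x_ix_j$ would be incompatible with having positive-integer distances in a connected graph, which will require exploiting integrality of $d_{ij}$ together with the triangle inequality along a shortest path between two vertices carrying different $x$-values. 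Once that is done, $x$ must be constant, and then $1/d_{ij}$ equals the same fixed positive value for every pair $i\neq j$; by connectedness at least one such pair satisfies $d_{ij}=1$, so that fixed value is $1$, forcing $G\cong K_n$.
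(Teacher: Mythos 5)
Your forward direction is correct, and your rank-one setup for the converse (Perron--Frobenius plus the minimal polynomial giving $RD_\alpha(G)-\lambda I=(\mu-\lambda)xx^T$, hence the off-diagonal and diagonal identities and the conclusion that $x$ takes at most two values) is also sound. But the step you flag as ``the obstacle I expect to be the main one'' is a genuine gap, and --- more seriously --- it cannot be closed, because the two-valued case actually occurs. Push your own identities one step further: if $x$ is non-constant, then $(1-\alpha)/d_{ij}=(\mu-\lambda)x_ix_j$ together with $d_{ij}\geq 1$ for all pairs and $d_{ij}=1$ for each vertex and some neighbour forces the larger value of $x$ to be attained at exactly one vertex $v$, every other vertex to be adjacent to $v$, and all remaining pairwise distances to equal the ratio of the two values, which must then be $2$; so $G\cong K_{1,n-1}$. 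The diagonal identities $\alpha\,RTr_i=\lambda+(\mu-\lambda)x_i^2$ do not contradict this: they are consistent precisely when $\alpha=3/(n+1)$, and at that value the star genuinely has two distinct $RD_\alpha$ eigenvalues. Concretely, for $G=P_3=K_{1,2}$ and $\alpha=3/4$,
\[
RD_{3/4}(P_3)=\begin{pmatrix}3/2&1/4&1/4\\[2pt]1/4&9/8&1/8\\[2pt]1/4&1/8&9/8\end{pmatrix},
\qquad \sigma\bigl(RD_{3/4}(P_3)\bigr)=\{7/4,\,1,\,1\},
\]
and similarly $RD_{1/2}(K_{1,4})=\tfrac12 RQ(K_{1,4})$ has spectrum $\{3,1^{[4]}\}$. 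So the ``only if'' direction of the theorem is false for these $(\alpha,n)$, and no appeal to integrality of distances or the triangle inequality can rescue your planned step.

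For comparison with the paper: it gives no proof at all, only the remark that the argument is ``similar to'' Lemma 4 of \cite{metr2021}, which treats $\alpha=0$. There the diagonal identity degenerates to $(\mu-\lambda)x_i^2=-\lambda$, which forces $x$ to be constant and closes the argument immediately; that is exactly the step that breaks once $\alpha>0$, because the nonconstant diagonal $\alpha\,RTr_i$ can absorb a nonconstant $x_i^2$. Your instinct about where the difficulty sits was exactly right; the honest conclusion is that the statement needs either a restriction on $\alpha$ or the star $K_{1,n-1}$ at $\alpha=3/(n+1)$ admitted as a second extremal family, and this also affects the equality analysis in Theorem \ref{t2-6}, which invokes Theorem \ref{t2-3}.
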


\begin{lemma}\label{l2-4}\cite{tccu2022}
Let $G$ be a connected graph with $|V(G)|=n\geq 2$. Then
$$ \sqrt{\frac{\sum\limits_{i=1}^{n}(RTr_{G}(v_{i}))^{2}}{n}}\leq \lambda_{1}(RD_{\alpha}(G))\leq
\max_{1\leq i\leq n} \left( \alpha RTr_{G}(v_{i})+(1-\alpha)\sum_{j=1,j\neq i}^{n}
\frac{1}{d_{G}(v_{i},v_{j})}\sqrt{\frac{RTr_{G}(v_{j})}{RTr_{G}(v_{i})}}    \right),$$
with equality if and only if $G$ is a reciprocal transmission regular graph.
\end{lemma}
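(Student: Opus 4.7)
The plan is to prove the two inequalities separately, both via Rayleigh/Perron--Frobenius style arguments applied to the symmetric, nonnegative, irreducible matrix $RD_{\alpha}(G)$. The central preliminary observation is the row--sum identity
$$RD_{\alpha}(G)\mathbf{1}=r,\qquad r:=(RTr_{G}(v_{1}),\ldots,RTr_{G}(v_{n}))^{T},$$
obtained because $RT(G)\mathbf{1}$ just extracts the diagonal entries $RTr_{G}(v_{i})$, while $RD(G)\mathbf{1}$ produces $\sum_{j\neq i}1/d_{ij}=RTr_{G}(v_{i})$, so the $\alpha$-convex combination collapses to $RTr_{G}(v_{i})$ in each coordinate.

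For the lower bound I would use the Rayleigh quotient on $RD_{\alpha}(G)^{2}$ with the test vector $\mathbf{1}$. Since $RD_{\alpha}(G)$ is symmetric, nonnegative, and irreducible, Perron--Frobenius gives $\lambda_{1}(RD_{\alpha}(G))\geq |\lambda_{n}(RD_{\alpha}(G))|$, so $\lambda_{1}(RD_{\alpha}(G))^{2}=\lambda_{1}(RD_{\alpha}(G)^{2})\geq \mathbf{1}^{T}RD_{\alpha}(G)^{2}\mathbf{1}/n=\|r\|^{2}/n$, which is the desired lower bound. Equality in this Rayleigh step forces $\mathbf{1}$ to be a top eigenvector of $RD_{\alpha}(G)^{2}$, and via the row--sum identity this is equivalent to $RTr_{G}(v_{i})$ being independent of $i$, i.e.\ $G$ being reciprocal transmission regular. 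For the upper bound I would invoke the Collatz--Wielandt inequality: for any positive vector $x$, $\lambda_{1}(RD_{\alpha}(G))\leq \max_{i}(RD_{\alpha}(G)x)_{i}/x_{i}$, with equality iff $x$ is a Perron eigenvector. The natural choice is $x_{i}=\sqrt{RTr_{G}(v_{i})}$, strictly positive because $G$ is connected with $n\geq 2$. A direct computation then separates $(RD_{\alpha}(G)x)_{i}/x_{i}$ into the diagonal contribution $\alpha RTr_{G}(v_{i})$ and the off--diagonal contribution $(1-\alpha)\sum_{j\neq i}(1/d_{ij})\sqrt{RTr_{G}(v_{j})/RTr_{G}(v_{i})}$, which is exactly the stated bound.

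The only delicate point, and the expected main obstacle, is the equality characterization for the upper bound. Equality forces the test vector $x=(\sqrt{RTr_{G}(v_{i})})_{i}$ to coincide, up to scaling, with the (essentially unique) Perron eigenvector of $RD_{\alpha}(G)$; one then unravels the identity $RD_{\alpha}(G)x=\lambda_{1}(RD_{\alpha}(G))x$ coordinate-by-coordinate, using the strict positivity of every off-diagonal entry $1/d_{ij}$, to conclude that all $RTr_{G}(v_{i})$ must coincide. The reverse direction is immediate, since for a $k$-reciprocal transmission regular graph $\mathbf{1}$ is a simple eigenvector of both $RT(G)$ and $RD(G)$ with eigenvalue $k$, hence of $RD_{\alpha}(G)$ with eigenvalue $k=\lambda_{1}(RD_{\alpha}(G))$, and both bounds collapse to $k$.
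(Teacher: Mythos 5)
First, note that the paper does not prove this statement at all: Lemma~\ref{l2-4} is quoted verbatim from \cite{tccu2022}, so there is no in-paper proof to compare against. Judged on its own, your plan is the standard (and surely the intended) one: the row-sum identity $RD_{\alpha}(G)\mathbf{1}=r$, the Rayleigh quotient of $RD_{\alpha}(G)^{2}$ at $\mathbf{1}$ for the lower bound, and the Collatz--Wielandt bound with the weight vector $x_i=\sqrt{RTr_G(v_i)}$ for the upper bound. Both inequalities come out correctly this way, and the lower-bound equality case also closes, provided you add one sentence you currently omit: you need $\lambda_1(RD_{\alpha}(G))^2=\lambda_{\max}(RD_{\alpha}(G)^2)$ and, more importantly, that the $\lambda_1^2$-eigenspace of $RD_{\alpha}(G)^2$ is exactly the Perron line, which requires ruling out $-\lambda_1$ as an eigenvalue of $RD_{\alpha}(G)$. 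This holds because for $n\ge 3$ all off-diagonal entries $1/d_{ij}$ are positive, so the matrix is primitive (the case $n=2$ is $K_2$, which is reciprocal transmission regular anyway). You should also flag that the whole irreducibility-based equality discussion silently assumes $\alpha<1$: at $\alpha=1$ the matrix is diagonal and the upper bound is attained by every graph.

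The genuine gap is exactly where you predicted it, and you have not closed it: the forward direction of the equality case for the upper bound. Equality does force $x=(\sqrt{RTr_G(v_i)})_i$ to be the Perron vector, but ``unravelling $RD_{\alpha}(G)x=\lambda_1 x$ coordinate-by-coordinate using positivity of the off-diagonal entries'' does not obviously yield that all $RTr_G(v_i)$ coincide. The natural attempts fail: evaluating the identity $\alpha RTr_i+(1-\alpha)\sum_{j\neq i}\frac{1}{d_{ij}}\sqrt{RTr_j/RTr_i}=\lambda_1$ at the vertex of maximum (resp.\ minimum) reciprocal transmission and bounding $\sqrt{RTr_j}$ by its extreme values only gives $RTr_{\min}\le\lambda_1\le RTr_{\max}$, with each endpoint attained iff the graph is reciprocal transmission regular --- no contradiction when the inequalities are strict. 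Likewise, multiplying the $i$-th equation by $\sqrt{RTr_i}$, summing, and applying AM--GM to $\sqrt{RTr_iRTr_j}$ gives $\lambda_1\sum_i RTr_i\le\sum_i RTr_i^2$, which again is not contradicted by anything you have established (one cannot use $\lambda_1\ge\sum RTr_i^2/\sum RTr_i$ in general). So the key step ``$(\sqrt{RTr_i})_i$ is the Perron vector $\Rightarrow$ $G$ is reciprocal transmission regular'' is asserted, not proved; it needs a genuinely new ingredient beyond entrywise positivity, and as written your proof of the equality characterization does not go through.
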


\begin{lemma}\label{l2-5}\cite{tccu2022}
Let $G$ be a connected graph with $|V(G)|=n\geq 2$ and Harary index $H(G)$. Then
$$ \lambda_{1}(RD_{\alpha}(G))\geq \frac{2H(G)}{n},$$
with equality if and only if $G$ is a reciprocal transmission regular graph.
\end{lemma}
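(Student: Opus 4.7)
The plan is to prove Lemma \ref{l2-5} via a Rayleigh quotient argument with the all-ones vector. Since $RD_{\alpha}(G)$ is real symmetric, for any nonzero $x\in\mathbb{R}^{n}$ we have $\lambda_{1}(RD_{\alpha}(G))\geq \frac{x^{T}RD_{\alpha}(G)x}{x^{T}x}$, with equality if and only if $x$ is an eigenvector associated with $\lambda_{1}(RD_{\alpha}(G))$. The natural choice is $x=\mathbf{1}$, the all-ones vector, because then $x^{T}RD_{\alpha}(G)x$ reduces to the sum of all entries of $RD_{\alpha}(G)$, and the Harary index will enter directly through this sum.

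The first step is the entry sum computation. The diagonal of $RD_{\alpha}(G)$ contributes $\alpha\sum_{i=1}^{n}RTr_{G}(v_{i})$ and the off-diagonal part contributes $(1-\alpha)\sum_{i\neq j}\frac{1}{d_{ij}}$. Using that $\sum_{i=1}^{n}RTr_{G}(v_{i})=\sum_{i\neq j}\frac{1}{d_{ij}}=2H(G)$, these collapse to $\mathbf{1}^{T}RD_{\alpha}(G)\mathbf{1}=2\alpha H(G)+2(1-\alpha)H(G)=2H(G)$. Since $\mathbf{1}^{T}\mathbf{1}=n$, the Rayleigh quotient bound immediately gives $\lambda_{1}(RD_{\alpha}(G))\geq\frac{2H(G)}{n}$.

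For the equality case I would compute $RD_{\alpha}(G)\mathbf{1}$ row by row: the $i$-th entry is $\alpha RTr_{G}(v_{i})+(1-\alpha)\sum_{j\neq i}\frac{1}{d_{ij}}=\alpha RTr_{G}(v_{i})+(1-\alpha)RTr_{G}(v_{i})=RTr_{G}(v_{i})$. Thus $\mathbf{1}$ is an eigenvector of $RD_{\alpha}(G)$ if and only if $RTr_{G}(v_{i})$ is independent of $i$, i.e.\ $G$ is reciprocal transmission regular. In that case the common value equals $\frac{2H(G)}{n}$, and since $RD_{\alpha}(G)$ is nonnegative and irreducible, the Perron–Frobenius theorem guarantees that the positive eigenvector $\mathbf{1}$ actually corresponds to $\lambda_{1}(RD_{\alpha}(G))$, so equality is attained.

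There is no real obstacle here; the proof is essentially a one-line Rayleigh quotient calculation once the entry-sum identity $\sum_{i}RTr_{G}(v_{i})=2H(G)$ is observed. The only place where a little care is needed is the equality discussion: one must explicitly invoke irreducibility (equivalently, connectedness of $G$) to conclude via Perron–Frobenius that the eigenvector $\mathbf{1}$ belongs to the \emph{largest} eigenvalue rather than to some smaller one, which is what upgrades the Rayleigh quotient characterization of equality into the stated ``if and only if'' with $\lambda_{1}$.
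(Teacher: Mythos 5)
Your proof is correct. Note that the paper does not actually prove Lemma \ref{l2-5}: it is quoted from the cited reference \cite{tccu2022}, so there is no in-paper argument to compare against. Your Rayleigh-quotient computation is the standard route: the entry sum $\mathbf{1}^{T}RD_{\alpha}(G)\mathbf{1}=2\alpha H(G)+2(1-\alpha)H(G)=2H(G)$ is right, the row-sum identity $(RD_{\alpha}(G)\mathbf{1})_{i}=RTr_{G}(v_{i})$ is right, and the equality analysis (equality iff $\mathbf{1}$ lies in the $\lambda_{1}$-eigenspace, which by Perron--Frobenius happens iff all reciprocal transmissions coincide) is sound. The only caveat is the endpoint $\alpha=1$: there $RD_{1}(G)=RT(G)$ is diagonal, hence not irreducible, so the Perron--Frobenius step does not literally apply; but in that case the reciprocal-transmission-regular hypothesis makes the matrix a scalar multiple of $I_{n}$ and the equality claim is immediate, so this is a one-line patch rather than a gap (and the paper itself glosses over the same point when it asserts irreducibility of $RD_{\alpha}(G)$ for all $\alpha$).
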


\begin{theorem}\label{t2-6}
Let $G$ be a connected graph with $|V(G)|=n\geq 2$ and Harary index $H(G)$. Then

$(i)$ for $\alpha\in [0,1)$, we have $S_{RD_{\alpha}}(G)\geq \frac{2(1-\alpha)H(G)}{n-1}$,
with equality if and only if $G\cong K_{n}$.

$(ii)$ for $\alpha\in [\frac{1}{2},1)$, we have $S_{RD_{\alpha}}(G)\geq x-\sqrt{\frac{\|RD_{\alpha}\|_{F}^{2}-x^{2}}{n-1}}$, where $x=\sqrt{\frac{\sum\limits_{i=1}^{n}(RTr_{G}(v_{i}))^{2}}{n}}$ or $\frac{2H(G)}{n}$,
with equality if and only if $G\cong K_{n}$.
\end{theorem}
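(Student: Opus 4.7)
For both parts the plan is the same at a high level: combine the trivial fact $\mathrm{tr}(RD_\alpha(G)) = 2\alpha H(G)$ (equivalently $\sum_i \lambda_i = 2\alpha H(G)$) together with the lower bounds on $\lambda_1$ provided by Lemmas \ref{l2-4} and \ref{l2-5}, to push $\lambda_n$ downward and produce a gap. The only extra ingredient in part $(ii)$ is that when $\alpha\in[\tfrac12,1)$ the matrix $RD_\alpha(G)$ is positive semidefinite, so every $\lambda_i\geq 0$ and the Frobenius norm can be split via $\|RD_\alpha\|_F^2=\sum_i\lambda_i^2=\lambda_1^2+\sum_{i=2}^n\lambda_i^2$.

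For part $(i)$, I would write
\[
(n-1)\lambda_n \;\leq\; \sum_{i=2}^{n}\lambda_i \;=\; 2\alpha H(G) - \lambda_1,
\]
rearrange to $\lambda_1-\lambda_n \geq \tfrac{n\lambda_1 - 2\alpha H(G)}{n-1}$, and then substitute the lower bound $\lambda_1 \geq \tfrac{2H(G)}{n}$ from Lemma \ref{l2-5}; the $H(G)$ terms line up to give exactly $\tfrac{2(1-\alpha)H(G)}{n-1}$. For part $(ii)$, since each $\lambda_i\geq \lambda_n\geq 0$ under the positive-semidefiniteness, I would use
\[
(n-1)\lambda_n^2 \;\leq\; \sum_{i=2}^{n}\lambda_i^2 \;=\; \|RD_\alpha\|_F^2 - \lambda_1^2,
\]
so $\lambda_n \leq \sqrt{(\|RD_\alpha\|_F^2-\lambda_1^2)/(n-1)}$ and hence $S_{RD_\alpha}(G)\geq \lambda_1 - \sqrt{(\|RD_\alpha\|_F^2-\lambda_1^2)/(n-1)}$. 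A short monotonicity check shows that the function $f(t)=t-\sqrt{(\|RD_\alpha\|_F^2-t^2)/(n-1)}$ is strictly increasing on $[0,\|RD_\alpha\|_F]$, which lets me replace $\lambda_1$ by either of the lower bounds $\sqrt{\sum_i RTr_G(v_i)^2/n}$ (Lemma \ref{l2-4}) or $2H(G)/n$ (Lemma \ref{l2-5}) and obtain both advertised forms of $x$.

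The subtle part is the equality discussion, which I expect to be the main obstacle. In $(i)$, equality requires both Lemma \ref{l2-5} to be tight (forcing $G$ to be reciprocal transmission regular) and $\lambda_2=\lambda_3=\cdots=\lambda_n$ (so that the averaging inequality on $\lambda_n$ is tight); this means $RD_\alpha(G)$ has at most two distinct eigenvalues, and Theorem \ref{t2-3} then forces $G\cong K_n$. In $(ii)$ the analogous discussion uses tightness in Lemma \ref{l2-4} or \ref{l2-5} plus equality in $(n-1)\lambda_n^2\leq \sum_{i\geq 2}\lambda_i^2$, again yielding at most two distinct eigenvalues and hence $G\cong K_n$ by Theorem \ref{t2-3}. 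Finally I would double-check the conclusion against the explicit spectrum $\sigma(RD_\alpha(K_n))=\{n-1,(\alpha n-1)^{[n-1]}\}$: the bound in $(i)$ evaluates to $(1-\alpha)n$ and the bound in $(ii)$ reduces to $(n-1)-|\alpha n-1|=(1-\alpha)n$ for $\alpha\in[\tfrac12,1)$, matching $S_{RD_\alpha}(K_n)=(n-1)-(\alpha n-1)$, which confirms sharpness.
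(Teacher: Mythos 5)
Your proposal is correct and follows essentially the same route as the paper's proof: part $(i)$ via the trace identity $\sum_i\lambda_i=2\alpha H(G)$ combined with Lemma \ref{l2-5}, and part $(ii)$ via positive semidefiniteness, the Frobenius-norm split $(n-1)\lambda_n^2\leq\|RD_\alpha\|_F^2-\lambda_1^2$, and the monotonicity of $f$, with the equality cases settled by Theorem \ref{t2-3} exactly as the paper does. The closing sanity check against $\sigma(RD_\alpha(K_n))$ is a sensible addition but does not change the argument.
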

\begin{proof}
Since $2\alpha H(G)=tr(RD_{\alpha}(G))=\sum\limits_{i=1}^{n}\lambda_{i}\geq \lambda_{1}+(n-1)\lambda_{n}$, then $\theta_{n}\leq \frac{2\alpha H(G)-\theta_{1}}{n-1}$,
thus by Lemma \ref{l2-5}, we have
 $S_{RD_{\alpha}}(G)\geq \lambda_{1}-\frac{2\alpha H(G)-\lambda_{1}}{n-1}=\frac{n}{n-1}\lambda_{1}
 -\frac{2\alpha H(G)}{n-1}\geq \frac{n}{n-1}\frac{2H(G)}{n}-\frac{2\alpha H(G)}{n-1}=\frac{2(1-\alpha)H(G)}{n-1}$, with equality if and only if $\lambda_{2}=\lambda_{3}=
 \cdots \lambda_{n}$ and $G$ is a reciprocal transmission regular graph, i.e., $G$ has two distinct $RD_{\alpha}$ eigenvalues and $G$ is a reciprocal transmission regular graph, then by Theorem \ref{t2-3}, $G\cong K_{n}$.

Note that $RD_{\alpha}(G)$ is a positive semidefinite matrix if $\alpha\in [\frac{1}{2},1)$ (see Page 9 of \cite{tccu2022}).

Since $\|RD_{\alpha}\|_{F}^{2}=tr((RD_{\alpha})^{2})=\sum\limits_{i=1}^{n}|\lambda_{i}(RD_{\alpha})|^{2}
\geq \theta_{1}^{2}(RD_{\alpha})+(n-1)\theta_{n}^{2}(RD_{\alpha})$, with equality if and only if
$RD_{\alpha}(G)$ has two distinct eigenvalues, i.e., $G\cong K_{n}$ (by Theorem \ref{t2-3}).
Then $\lambda_{n}(RD_{\alpha})\leq \sqrt{\frac{\|RD_{\alpha}\|_{F}^{2}-\lambda_{1}^{2}(RD_{\alpha})}{n-1}}$.
Thus $S_{RD_{\alpha}}(G)\geq \lambda_{1}(RD_{\alpha})-\sqrt{\frac{\|RD_{\alpha}\|_{F}^{2}-\lambda_{1}^{2}(RD_{\alpha})}{n-1}}$.
Let $f(x)=x-\sqrt{\frac{\|RD_{\alpha}\|_{F}^{2}-x^{2}}{n-1}}$, $f'(x)=1+\frac{x}{\sqrt{\frac{\|RD_{\alpha}\|_{F}^{2}-x^{2}}{n-1}}}>0$ for any $x\in (0,+\infty)$. Note that $\lambda_{1}(RD_{\alpha})>0$. By Lemma \ref{l2-4} and \ref{l2-5}, we have
$S_{RD_{\alpha}}(G)\geq x-\sqrt{\frac{\|RD_{\alpha}\|_{F}^{2}-x^{2}}{n-1}}$, where $x=\sqrt{\frac{\sum\limits_{i=1}^{n}(RTr_{G}(v_{i}))^{2}}{n}}$ or $\frac{2H(G)}{n}$,
with equality if and only if $G\cong K_{n}$.
\end{proof}


Denote by $\lambda_{1}(M)\geq \lambda_{2}(M)\geq \cdots \geq \lambda_{n}(M)$ the eigenvalues of matrix $M$ of order $n$.
\begin{lemma}\label{l2-8}\cite{brha2010}
Let $C=A+B$, where $A, B$ are Hermitian matrices of order $n$.

$(i)$ if $n\geq j\geq i\geq 1$, then
\begin{equation}\label{eq:21}
 \lambda_{i}(C)\geq \lambda_{j}(A)+\lambda_{i-j+n}(B),
\end{equation}

$(ii)$ if $n\geq i\geq j\geq 1$, then
\begin{equation}\label{eq:22}
 \lambda_{i}(C)\leq \lambda_{j}(A)+\lambda_{i-j+1}(B),
\end{equation}
with both equalities if and only if there is a nonzero vector that is an eigenvector to each of the three eigenvalues involved.
\end{lemma}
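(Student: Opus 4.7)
The plan is to apply the Courant--Fischer min-max characterization of the eigenvalues of a Hermitian matrix $M$ of order $n$,
$$\lambda_{k}(M) = \max_{\dim W = k}\ \min_{0 \neq x \in W} \frac{x^{*} M x}{x^{*} x} = \min_{\dim W = n-k+1}\ \max_{0 \neq x \in W} \frac{x^{*} M x}{x^{*} x},$$
together with the elementary subspace inequality $\dim(U\cap V) \geq \dim U + \dim V - n$ for subspaces of $\mathbb{C}^{n}$.

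For part (i), assuming $n \geq j \geq i \geq 1$, first I would let $U$ be the span of orthonormal eigenvectors of $A$ corresponding to $\lambda_{1}(A),\ldots,\lambda_{j}(A)$ and $V$ the span of orthonormal eigenvectors of $B$ corresponding to $\lambda_{1}(B),\ldots,\lambda_{i-j+n}(B)$; the index $i-j+n$ lies in $\{1,\ldots,n\}$ because $1 \leq i \leq j \leq n$. On $U$ the Rayleigh quotient of $A$ is at least $\lambda_{j}(A)$, and on $V$ that of $B$ is at least $\lambda_{i-j+n}(B)$. Since $\dim U + \dim V = j + (i-j+n) = i+n$, one obtains $\dim(U\cap V) \geq i$. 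Plugging any $i$-dimensional subspace $W\subseteq U\cap V$ into the max-min formula for $\lambda_{i}(C)$ yields $\lambda_{i}(C) \geq \lambda_{j}(A)+\lambda_{i-j+n}(B)$. Part (ii) follows dually: using the min-max formulation for $\lambda_{i}(C)$, take $U$ spanned by the eigenvectors of $A$ for $\lambda_{j}(A),\ldots,\lambda_{n}(A)$ (dimension $n-j+1$) and $V$ by the eigenvectors of $B$ for $\lambda_{i-j+1}(B),\ldots,\lambda_{n}(B)$ (dimension $n-i+j$); the same dimension count gives $\dim(U\cap V) \geq n-i+1$, and on this intersection the Rayleigh quotient of $C$ is at most $\lambda_{j}(A)+\lambda_{i-j+1}(B)$.

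The hard part will be the equality characterization. I expect the argument to run as follows: if equality holds in (i), then at some unit vector $x_{0}$ in a minimising $i$-dimensional subspace of $U\cap V$ all three Rayleigh quotient bounds must be sharp simultaneously, forcing $x_{0}$ to be a common eigenvector of $A$ at $\lambda_{j}(A)$, of $B$ at $\lambda_{i-j+n}(B)$, and of $C$ at $\lambda_{i}(C)$; conversely, if such a common eigenvector exists, adding the three eigenvalue equations makes equality immediate. The analogous statement for (ii), with maxima and minima exchanged throughout and the min-max formula in place of the max-min formula, is handled by the parallel argument.
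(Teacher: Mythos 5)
The paper offers no proof of this lemma: it is quoted directly from Brouwer and Haemers \cite{brha2010} (these are Weyl's inequalities together with So's equality condition), so there is no internal argument to compare yours against. Your Courant--Fischer proof is the standard one and is correct: the index checks ($1\leq i-j+n\leq n$ in (i), $1\leq i-j+1\leq n$ in (ii)), the dimension counts $\dim(U\cap V)\geq \dim U+\dim V-n$, and the splitting of the Rayleigh quotient $R_{C}=R_{A}+R_{B}$ all go through as you describe. Two small points on the equality characterization, which you only sketch. First, the ``only if'' direction does work the way you indicate, but the key step deserves to be said explicitly: for a vector $x_{0}$ lying in the span of eigenvectors of $A$ for $\lambda_{1}(A),\dots,\lambda_{j}(A)$, the equality $R_{A}(x_{0})=\lambda_{j}(A)$ forces $x_{0}$ to lie in the $\lambda_{j}(A)$-eigenspace (expand $x_{0}$ in the orthonormal eigenbasis and note that any component along an eigenvalue strictly larger than $\lambda_{j}(A)$ would raise the quotient); the same for $B$. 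Second, $x_{0}$ being an eigenvector of $C$ for $\lambda_{i}(C)$ is \emph{not} deduced from $R_{C}(x_{0})=\lambda_{i}(C)$ (a Rayleigh quotient equal to an interior eigenvalue does not by itself give an eigenvector); rather it follows from $Cx_{0}=Ax_{0}+Bx_{0}=(\lambda_{j}(A)+\lambda_{i-j+n}(B))x_{0}$ together with the assumed equality of the eigenvalues. With those clarifications your argument is a complete and correct proof of the cited result.
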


By Lemma \ref{l2-8}, Tian et al.,\cite{tccu2022} obtained the following bounds for $\lambda_{k}(RD_{\alpha}(G))$ in terms of $\lambda_{k}(RD(G))$.

\begin{lemma}\label{l2-9}\cite{tccu2022}
Let $G$ be a connected graph with $|V(G)|=n$. Let $RTr_{1}$ (resp. $RTr_{n}$) the maximum reciprocal transmission (resp. minimum reciprocal transmission). Then
$$ \alpha RTr_{n}+(1-\alpha)\lambda_{k}(RD(G))\leq \lambda_{k}(RD_{\alpha}(G))\leq \alpha RTr_{1}+(1-\alpha)\lambda_{k}(RD(G)).  $$
\end{lemma}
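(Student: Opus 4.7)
The plan is to apply Weyl's inequalities (Lemma~\ref{l2-8}) to the defining decomposition
$$RD_{\alpha}(G) = \alpha RT(G) + (1-\alpha)RD(G).$$
The crucial preliminary observation is that $RT(G)$ is a diagonal matrix whose diagonal entries are exactly the vertex reciprocal transmissions $RTr_{G}(v_{i})$. Therefore the spectrum of $\alpha RT(G)$ is precisely $\{\alpha RTr_{G}(v_{i})\}_{i=1}^{n}$, and since $\alpha \geq 0$ preserves order, $\lambda_{1}(\alpha RT(G)) = \alpha RTr_{1}$ while $\lambda_{n}(\alpha RT(G)) = \alpha RTr_{n}$. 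Likewise $1-\alpha \geq 0$ gives $\lambda_{k}((1-\alpha)RD(G)) = (1-\alpha)\lambda_{k}(RD(G))$ for every $k$. With these identifications in hand, both inequalities drop out of a single application of Weyl.

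For the lower bound I would invoke (\ref{eq:21}) with $C = RD_{\alpha}(G)$, $A = \alpha RT(G)$, $B = (1-\alpha)RD(G)$, $i = k$, and $j = n$; the hypothesis $n \geq j \geq i \geq 1$ becomes the trivially true $n \geq n \geq k \geq 1$, and the conclusion reads
$$\lambda_{k}(RD_{\alpha}(G)) \geq \lambda_{n}(\alpha RT(G)) + \lambda_{k}((1-\alpha)RD(G)) = \alpha RTr_{n} + (1-\alpha)\lambda_{k}(RD(G)).$$
For the upper bound I would apply (\ref{eq:22}) with the same splitting, $i=k$, and $j=1$, so that $n \geq i \geq j \geq 1$ reduces to $n \geq k \geq 1$, and the conclusion is
$$\lambda_{k}(RD_{\alpha}(G)) \leq \lambda_{1}(\alpha RT(G)) + \lambda_{k}((1-\alpha)RD(G)) = \alpha RTr_{1} + (1-\alpha)\lambda_{k}(RD(G)).$$

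No genuine obstacle is expected: the argument is essentially bookkeeping with Weyl's inequality once the decomposition $RD_{\alpha} = \alpha RT + (1-\alpha)RD$ is written down. The only points requiring care are the index choices in Lemma~\ref{l2-8} — picking $j = n$ in part (i) and $j = 1$ in part (ii) is exactly what collapses the $B$-term to $\lambda_{k}(B)$ and isolates the extremal eigenvalue of $A$ — and the use of $\alpha, 1-\alpha \geq 0$, which guarantees that scalar multiplication does not reverse the ordering of eigenvalues of $RT(G)$ and $RD(G)$.
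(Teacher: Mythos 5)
Your proof is correct and follows exactly the route the paper indicates: the lemma is quoted from \cite{tccu2022} with the remark that it is obtained ``by Lemma~\ref{l2-8}'' applied to the decomposition $RD_{\alpha}(G)=\alpha RT(G)+(1-\alpha)RD(G)$, which is precisely your argument. Your index choices ($j=n$ in (\ref{eq:21}) and $j=1$ in (\ref{eq:22})) correctly collapse the $B$-term to $\lambda_{k}((1-\alpha)RD(G))=(1-\alpha)\lambda_{k}(RD(G))$ and isolate $\alpha RTr_{n}$ and $\alpha RTr_{1}$ as the extreme eigenvalues of the diagonal summand, so no gap remains.
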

In fact, we can easily know the bounds with equality if and only if $G$ is reciprocal transmission regular. Further, we have

\begin{theorem}\label{t2-10}
Let $G$ be a connected graph with $|V(G)|=n$. $RTr_{1}$ (resp. $RTr_{n}$) defined as Lemma \ref{l2-9}. Then
$$ \alpha(RTr_{n}-RTr_{1})+(1-\alpha)S_{RD}(G)\leq S_{RD_{\alpha}}(G)\leq \alpha(RTr_{1}-RTr_{n})+(1-\alpha)S_{RD}(G),  $$
with equality if and only if $G$ is reciprocal transmission regular.
\end{theorem}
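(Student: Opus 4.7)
The plan is to derive Theorem \ref{t2-10} as a direct consequence of Lemma \ref{l2-9} applied to the two extremal eigenvalues $k=1$ and $k=n$. Since $S_{RD_{\alpha}}(G) = \lambda_{1}(RD_{\alpha}(G)) - \lambda_{n}(RD_{\alpha}(G))$, the natural approach is to bound $\lambda_{1}(RD_{\alpha}(G))$ from above (resp.\ below) and $\lambda_{n}(RD_{\alpha}(G))$ from below (resp.\ above) to obtain the upper (resp.\ lower) estimate.

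Concretely, I would first write, using Lemma \ref{l2-9} with $k=1$,
$$\alpha RTr_{n} + (1-\alpha)\lambda_{1}(RD(G)) \leq \lambda_{1}(RD_{\alpha}(G)) \leq \alpha RTr_{1} + (1-\alpha)\lambda_{1}(RD(G)),$$
and with $k=n$,
$$\alpha RTr_{n} + (1-\alpha)\lambda_{n}(RD(G)) \leq \lambda_{n}(RD_{\alpha}(G)) \leq \alpha RTr_{1} + (1-\alpha)\lambda_{n}(RD(G)).$$
Subtracting the upper bound for $\lambda_{n}$ from the lower bound for $\lambda_{1}$ yields the lower bound on $S_{RD_{\alpha}}(G)$, namely $\alpha(RTr_{n}-RTr_{1}) + (1-\alpha)S_{RD}(G)$; symmetrically, subtracting the lower bound for $\lambda_{n}$ from the upper bound for $\lambda_{1}$ yields the upper bound $\alpha(RTr_{1}-RTr_{n}) + (1-\alpha)S_{RD}(G)$. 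Once the inequalities in Lemma \ref{l2-9} are in hand, the computation is purely arithmetic.

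The only genuinely delicate point is the equality characterization. The remark following Lemma \ref{l2-9} states that each of its four inequalities is an equality if and only if $G$ is reciprocal transmission regular. If $G$ is reciprocal transmission regular with common value $RTr$, then $RD_{\alpha}(G) = \alpha RTr \cdot I + (1-\alpha)RD(G)$, so $\lambda_{i}(RD_{\alpha}(G)) = \alpha RTr + (1-\alpha)\lambda_{i}(RD(G))$ for every $i$, which makes both the upper and lower bounds in the theorem tight (and incidentally $RTr_{1} = RTr_{n}$). Conversely, equality in either bound of the theorem forces equality in the two applications of Lemma \ref{l2-9} used to derive it, and hence forces $G$ to be reciprocal transmission regular.

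I do not anticipate any real obstacle: the theorem is a clean packaging of Lemma \ref{l2-9} at the two extremal indices, and the main thing to be careful about is cleanly separating the four one-sided bounds so that the right pair is combined in each direction, and then handling the equality discussion via the sharpness clause of Lemma \ref{l2-9}.
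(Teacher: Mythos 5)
Your proposal is correct and is exactly the argument the paper intends: the paper states Theorem \ref{t2-10} as an immediate consequence of Lemma \ref{l2-9} (applied at $k=1$ and $k=n$) together with the remark that each inequality in that lemma is tight precisely for reciprocal transmission regular graphs, which is how you also handle the equality case. No gaps.
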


Next, we consider the bounds of $S_{RD_{\alpha}}(G)$ of graphs in terms of $S_{A_{\alpha}}(G)$ or $S_{A_{\alpha}}(\overline{G})$.
\begin{theorem}\label{t2-11}
Let $G$ be a connected graph with $|V(G)|=n$ and diameter $d$. Then

$(i)$ if $d=2$, then $S_{RD_{\alpha}}(G)\leq (1-\alpha)n+\frac{1}{2}S_{A_{\alpha}}(\overline{G})$,
with equality if and only if $G$ is reciprocal transmission regular.

$(ii)$ if $d\geq 3$, then $S_{RD_{\alpha}}(G)\leq \frac{1}{2}(1-\alpha)n+\frac{1}{2}S_{A_{\alpha}}(G)+S(M^{*})$,
where $M^{*}(G)=\alpha RTr_{G}^{*}+(1-\alpha)M(G)$,
$M(G)=(t_{ij})_{n\times n}$, $t_{ij}=\min\{0,\frac{1}{d_{ij}}-\frac{1}{2}\}$;
$RTr_{G}^{*}(v_{i})=\sum\limits_{d_{ij}\geq 3}(\frac{1}{d_{ij}}-\frac{1}{2})$,
$RTr_{G}^{*}=diag( RTr_{G}^{*}(v_{1}),RTr_{G}^{*}(v_{2}), \cdots, RTr_{G}^{*}(v_{n}))$
be a diagonal matrix.
\end{theorem}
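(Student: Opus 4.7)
The plan is to decompose $RD_\alpha(G)$ as a sum of a handful of Hermitian matrices whose spreads are either zero, easy to compute, or are exactly the quantities $S_{A_\alpha}(G)$, $S_{A_\alpha}(\overline{G})$ and $S(M^{*}(G))$ appearing on the right-hand sides. Both bounds then follow immediately from the subadditivity of the spread, $S(X_1+X_2)\le S(X_1)+S(X_2)$, which is an immediate corollary of Weyl's inequality (Lemma~\ref{l2-8}), together with the trivial identities $S(cX)=|c|S(X)$ and $S(X+\beta I)=S(X)$.

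The engine is a truncate-at-distance-$2$ decomposition. For every connected graph $G$, splitting each entry $1/d_{ij}$ into its ``$\tfrac{1}{2}$ part'' and the residual $1/d_{ij}-\tfrac{1}{2}$ gives
\[
RD(G)=\tfrac{1}{2}A(G)+\tfrac{1}{2}(J-I)+M(G),\qquad RT(G)=\tfrac{1}{2}D(G)+\tfrac{n-1}{2}I+RTr_G^{*},
\]
and taking the convex combination $\alpha\cdot RT(G)+(1-\alpha)\cdot RD(G)$ and rearranging yields the central identity
\[
RD_\alpha(G) \;=\; \tfrac{1}{2}A_\alpha(G)\;+\;\tfrac{\alpha n-1}{2}I\;+\;\tfrac{1-\alpha}{2}J\;+\;M^{*}(G),
\]
where $M^{*}(G)=\alpha RTr_G^{*}+(1-\alpha)M(G)$ is exactly the matrix appearing in the statement. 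Since $S(I)=0$ and $S(J)=n$, subadditivity immediately yields Part~(ii): $S_{RD_\alpha}(G)\le \tfrac{1}{2}S_{A_\alpha}(G)+\tfrac{(1-\alpha)n}{2}+S(M^{*}(G))$.

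For Part~(i), the hypothesis $d=2$ forces $M(G)=0$ and $RTr_G^{*}=0$, so $M^{*}(G)=0$ and the central identity simplifies. To trade $A_\alpha(G)$ for $A_\alpha(\overline{G})$ I would invoke the elementary identity $A_\alpha(G)+A_\alpha(\overline{G})=(\alpha n-1)I+(1-\alpha)J$, which is read off directly from $D(G)+D(\overline{G})=(n-1)I$ and $A(G)+A(\overline{G})=J-I$. Substituting rewrites the identity as $RD_\alpha(G)=(\alpha n-1)I+(1-\alpha)J-\tfrac{1}{2}A_\alpha(\overline{G})$. Since $(\alpha n-1)I+(1-\alpha)J$ has eigenvalues $n-1$ (simple, eigenvector $\mathbf{1}$) and $\alpha n-1$ (with multiplicity $n-1$), its spread is $(1-\alpha)n$, and subadditivity delivers the stated bound. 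The principal bookkeeping obstacle is verifying the distance-$2$ decomposition itself, in particular checking that the row sums of $\tfrac{1}{2}A(G)+\tfrac{1}{2}(J-I)$ work out to $\tfrac{d_i}{2}+\tfrac{n-1}{2}$ so that the correction diagonal is precisely $RTr_G^{*}$. The equality clause in Part~(i) then comes from the equality condition in Lemma~\ref{l2-8}: equality in subadditivity at both extremes requires a shared eigenvector for $(\alpha n-1)I+(1-\alpha)J$ and $A_\alpha(\overline{G})$; since the former's extreme eigenvector is $\mathbf{1}$, this forces $\mathbf{1}$ to be an eigenvector of $RD_\alpha(G)$, equivalent to $G$ being reciprocal transmission regular.
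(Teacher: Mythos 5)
Your proof is correct and follows essentially the same route as the paper: your central identity is exactly the paper's decomposition $RD_{\alpha}(G)=\tfrac{1}{2}A_{\alpha}(K_{n})+\tfrac{1}{2}A_{\alpha}(G)+M^{*}(G)$ (since $\tfrac{1}{2}A_{\alpha}(K_{n})=\tfrac{\alpha n-1}{2}I+\tfrac{1-\alpha}{2}J$), which for $d=2$ specializes to the paper's $RD_{\alpha}(G)=(\alpha n-1)I+(1-\alpha)J-\tfrac{1}{2}A_{\alpha}(\overline{G})$, and both arguments then conclude by Weyl's inequality (Lemma~\ref{l2-8}) in the form of subadditivity of the spread. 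Your sketch of the equality clause in (i) only argues necessity of a shared eigenvector (the paper merely asserts the clause), but the derivations of the two inequalities themselves coincide.
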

\begin{proof}
Suppose that $spec(A_{\alpha}(G))=\{\mu_{1},\mu_{2},\cdots,\mu_{n}\}$ and $\mu_{1}\geq \mu_{2}\geq\cdots\geq \mu_{n}$; $spec(A_{\alpha}(\overline{G}))=\{\overline{\mu_{1}},\overline{\mu_{2}},\cdots,\overline{\mu_{n}}\}$ and $\overline{\mu_{1}}\geq \overline{\mu_{2}}\geq\cdots\geq \overline{\mu_{n}}$.
Let $J_{n}$ be the all one matrix, $I_{n}$ the unit matix. Let $D(G)=diag(d_{1},d_{2},\cdots,d_{n})$ is the degree diagonal matrix.

$(i)$ Since $d=2$, then $RD(G)=\frac{1}{2}(J_{n}-I_{n}+A(G))=A(G)+\frac{1}{2}A(\overline{G})$ and
$RTr_{G}(v_{i})=d_{i}+\frac{1}{2}(n-d_{i}-1)=\frac{1}{2}(n-1+d_{i})$. Thus
\begin{eqnarray*}
RD_{\alpha}(G) & = & \alpha RT(G)+(1-\alpha)RD(G) \\
& = & \frac{1}{2}\alpha(n-1)I_{n}+\frac{1}{2}\alpha D(G)+(1-\alpha)(A(G)+\frac{1}{2}A(\overline{G})) \\
& = & \frac{1}{2}\alpha(n-1)I_{n}+\frac{1}{2}\alpha D(G)+\frac{1}{2}\alpha D(\overline{G})+(1-\alpha)(A(G)+A(\overline{G}))\\
& \quad & -\frac{1}{2}(\alpha D(\overline{G})+(1-\alpha)A(\overline{G}))\\
& = & \frac{1}{2}\alpha(n-1)I_{n}+\frac{1}{2}\alpha (n-1)I_{n}+(1-\alpha)(J_{n}-I_{n})-\frac{1}{2}A_{\alpha}(\overline{G})\\
& = & (\alpha n-1)I_{n}-\frac{1}{2}A_{\alpha}(\overline{G})+(1-\alpha)J_{n}.
\end{eqnarray*}
By Lemma \ref{l2-8}, we have
$$ \lambda_{1}(RD_{\alpha}(G))\leq \alpha n-1-\frac{1}{2}\overline{\mu_{n}}+(1-\alpha)n=n-1-\frac{1}{2}\overline{\mu_{n}},$$
$$ \lambda_{n}(RD_{\alpha}(G))\geq \alpha n-1-\frac{1}{2}\overline{\mu_{1}}.$$
Thus $S_{RD_{\alpha}}(G)\leq (1-\alpha)n+\frac{1}{2}S_{A_{\alpha}}(\overline{G})$,
with equality if and only if $G$ is reciprocal transmission regular.

$(ii)$ Since $d\geq 3$, then
$RTr_{G}(v_{i})=d_{i}\times 1+\frac{1}{2}\times \overline{d_{i}}+RTr^{*}_{G}(v_{i})$,
where $RTr_{G}^{*}(v_{i})=\sum\limits_{d_{ij}\geq 3}(\frac{1}{d_{ij}}-\frac{1}{2})$,
$RTr_{G}^{*}=diag( RTr_{G}^{*}(v_{1}),RTr_{G}^{*}(v_{2}), \cdots, RTr_{G}^{*}(v_{n}))$
be a diagonal matrix. Let $M^{*}(G)=\alpha RTr_{G}^{*}+(1-\alpha)M(G)$,
$M(G)=(t_{ij})_{n\times n}$, $t_{ij}=\min\{0,\frac{1}{d_{ij}}-\frac{1}{2}\}$.
Thus
\begin{eqnarray*}
RD_{\alpha}(G) & = & \alpha RT(G)+(1-\alpha)RD(G) \\
& = & \alpha \left( D(G)+\frac{1}{2}D(\overline{G})+RTr_{G}^{*} \right) +(1-\alpha)\left( A(G)+\frac{1}{2}A(\overline{G})+M(G)\right) \\
& = & \frac{1}{2}\alpha (D(G)+D(\overline{G})) +\frac{1}{2}(1-\alpha) (A(G)+A(\overline{G}))+\frac{1}{2}(\alpha D(G)+(1-\alpha)A(G))\\
& \quad & +\alpha RTr_{G}^{*}+(1-\alpha)M(G)\\
& = & \frac{1}{2}(\alpha D(K_{n})+(1-\alpha)A(K_{n}))+\frac{1}{2}A_{\alpha}(G)+\alpha RTr_{G}^{*}+(1-\alpha)M(G)\\
& = & \frac{1}{2}A_{\alpha}(K_{n})+\frac{1}{2}A_{\alpha}(G)+M^{*}(G).
\end{eqnarray*}
Note that $spec(A_{\alpha}(G))=(n-1,(\alpha n-1)^{[n-1]})$. By Lemma \ref{l2-8}, we have
\begin{eqnarray*}
\lambda_{1}(RD_{\alpha}(G)) & \leq &  \lambda_{1}(\frac{1}{2}A_{\alpha}(K_{n}))+\lambda_{1}(\frac{1}{2}A_{\alpha}(G)+M^{*}(G))\\
& = & \frac{1}{2}(n-1)+\lambda_{1}(\frac{1}{2}A_{\alpha}(G)+M^{*}(G)) \\
& \leq & \frac{1}{2}(n-1)+\frac{1}{2}\mu_{1}+\lambda_{1}(M^{*}(G)).
\end{eqnarray*}
\begin{eqnarray*}
\lambda_{n}(RD_{\alpha}(G)) & \leq &  \lambda_{n}(\frac{1}{2}A_{\alpha}(K_{n}))+\lambda_{n}(\frac{1}{2}A_{\alpha}(G)+M^{*}(G))\\
& = & \frac{1}{2}(\alpha n-1)+\lambda_{n}(\frac{1}{2}A_{\alpha}(G)+M^{*}(G)) \\
& \geq & \frac{1}{2}(\alpha n-1)+\frac{1}{2}\mu_{n}+\lambda_{1}(M^{*}(G)).
\end{eqnarray*}
Thus $S_{RD_{\alpha}}(G)\leq \frac{1}{2}(n-1)+\frac{1}{2}\mu_{1}+\lambda_{1}(M^{*}(G))-(\frac{1}{2}(\alpha n-1)+\frac{1}{2}\mu_{n}+\lambda_{1}(M^{*}(G)))=\frac{1}{2}(1-\alpha)n
+\frac{1}{2}S_{A_{\alpha}}(G)+S(M^{*})$.
\end{proof}

\begin{lemma}\label{l2-12}\cite{brha2010}
Let $A$ be a real symmetric matrix and $B$ is the quotient matrix of $A$.
Then the eigenvalues of $B$ interlace the eigenvalues of $A$.
Moreover if $B$ is an equitable quotient matrix, then the spectrum of $B$ is
contained in the spectrum of $A$.
\end{lemma}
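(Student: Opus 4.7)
The plan is to reduce the statement to Cauchy's interlacing theorem by realizing $B$ (up to similarity) as the compression of $A$ onto the subspace spanned by the characteristic vectors of the partition. First, I would fix the partition $V=V_{1}\cup\cdots\cup V_{m}$ that defines $B$ and introduce the $n\times m$ characteristic matrix $S$ whose $j$-th column is the indicator vector of $V_{j}$. Setting $D=S^{T}S=\mathrm{diag}(|V_{1}|,\ldots,|V_{m}|)$, the quotient matrix is $B=D^{-1}S^{T}AS$, which by construction has entries equal to the average row sums of the blocks of $A$.

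Next I would pass to an orthonormal version of $S$. Let $\hat{S}=SD^{-1/2}$, so that $\hat{S}^{T}\hat{S}=I_{m}$, and define $\tilde{B}=\hat{S}^{T}A\hat{S}=D^{-1/2}(S^{T}AS)D^{-1/2}$. A short computation shows $B=D^{-1/2}\tilde{B}D^{1/2}$, hence $B$ and $\tilde{B}$ are similar and share the same spectrum; moreover $\tilde{B}$ is symmetric. This is the step where care is needed, because the original $B$ is not symmetric in general, and the whole argument hinges on transferring the problem to the symmetric matrix $\tilde{B}$ via this normalization.

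The interlacing now follows from a standard application of Cauchy's theorem. I would extend $\hat{S}$ to an $n\times n$ orthogonal matrix $U=[\hat{S}\ \hat{S}']$; then $U^{T}AU$ is orthogonally similar to $A$, and its leading $m\times m$ principal submatrix is exactly $\tilde{B}$. By the Cauchy interlacing theorem, the eigenvalues of $\tilde{B}$ interlace those of $U^{T}AU$, hence those of $A$, and since $\tilde{B}$ and $B$ have equal spectra, the same interlacing holds for $B$.

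Finally, for the equitable case I would observe that equitability means that for every $v\in V_{i}$ the sum of entries of the row $A_{v,\cdot}$ restricted to columns in $V_{j}$ equals $B_{ij}$, independent of $v$. This is precisely the identity $AS=SB$. Hence for any eigenpair $(\mu,y)$ of $B$ the vector $x:=Sy$ satisfies $Ax=ASy=SBy=\mu Sy=\mu x$, and $x\neq 0$ because the columns of $S$ are linearly independent (they have disjoint supports). Consequently $\mu\in\sigma(A)$, yielding $\sigma(B)\subseteq\sigma(A)$, which completes the proof.
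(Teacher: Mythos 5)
Your proposal is correct and is essentially the standard argument for this lemma as given in the cited reference (Brouwer--Haemers): symmetrize the quotient via $\tilde{B}=D^{-1/2}S^{T}ASD^{-1/2}$, apply Cauchy interlacing to the compression, and use $AS=SB$ in the equitable case. The paper itself states this lemma without proof, so there is nothing further to compare against.
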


\begin{lemma}\label{l2-13}\cite{tccu2022}
For $a,b\geq 1$, let $K_{a,b}$ be a complete bipartite graph with $a+b=n$ vertices.
Then the spectrum of $RD_{\alpha}(K_{a,b})$ consists of
$$\left(\frac{\alpha(n+b)-1}{2}\right)^{[a-1]},\left(\frac{\alpha(n+a)-1}{2}\right)^{[b-1]} ,$$
and the rest eigenvalues are
$$ \frac{1}{2}\left( (\alpha+\frac{1}{2})n-1\pm \sqrt{(\alpha-\frac{1}{2})^{2}(a-b)^{2}+4(1-\alpha)^{2}ab}  \right)  .$$
\end{lemma}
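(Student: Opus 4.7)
The plan is to exploit the two-cell structure of $K_{a,b}$: within each part of the bipartition, all vertices are at distance $2$, and across the two parts every pair is at distance $1$. So the reciprocal distance matrix $RD(K_{a,b})$ has the block form
\[
RD(K_{a,b}) \;=\; \begin{pmatrix} \tfrac{1}{2}(J_a-I_a) & J_{a\times b}\\[2pt] J_{b\times a} & \tfrac{1}{2}(J_b-I_b)\end{pmatrix},
\]
the reciprocal transmissions are constant on each cell (namely $\tfrac{n+b-1}{2}$ on part $A$ and $\tfrac{n+a-1}{2}$ on part $B$), and hence $RT(K_{a,b})$ is a block-diagonal scalar matrix. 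This makes the bipartition $\{A,B\}$ an equitable partition for $RD_\alpha(K_{a,b})$.

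The first step is to produce the two high-multiplicity eigenvalues by a direct eigenvector argument. For any vector $x$ supported on $A$ with $\sum_{v\in A}x_v=0$, I would compute $RD_\alpha(K_{a,b})x$ row by row: on a vertex $v\in A$ the contribution from $RD(K_{a,b})$ reduces via $\sum_{u\in A\setminus\{v\}}\tfrac12 x_u=-\tfrac12 x_v$, while the contribution from the $B$-rows vanishes because $x$ is supported on $A$ and sums to zero there; combined with $RT(K_{a,b})x=\tfrac{n+b-1}{2}x$ this gives the eigenvalue $\tfrac{\alpha(n+b)-1}{2}$. The space of such $x$ has dimension $a-1$, yielding the multiplicity. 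Swapping the roles of $A$ and $B$ gives the eigenvalue $\tfrac{\alpha(n+a)-1}{2}$ with multiplicity $b-1$.

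For the two remaining eigenvalues I would invoke Lemma \ref{l2-12}. Since the partition into $A$ and $B$ is equitable, the $2\times2$ quotient matrix
\[
B \;=\; \begin{pmatrix} \alpha b+\tfrac{a-1}{2} & (1-\alpha)b\\[2pt] (1-\alpha)a & \alpha a+\tfrac{b-1}{2}\end{pmatrix}
\]
has its spectrum contained in $\sigma(RD_\alpha(K_{a,b}))$. Its trace simplifies to $(\alpha+\tfrac12)n-1$, the off-diagonal part of the discriminant contributes $4(1-\alpha)^2 ab$, and the key algebraic observation is that
\[
b_{11}-b_{22} \;=\; (b-a)\bigl(\alpha-\tfrac12\bigr),
\]
so $(b_{11}-b_{22})^2=(a-b)^2(\alpha-\tfrac12)^2$. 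Plugging into the quadratic formula produces exactly the two stated eigenvalues. Together with the $a-1$ and $b-1$ eigenvalues found above, this accounts for all $n=a+b$ eigenvalues, finishing the proof.

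The main obstacle is essentially bookkeeping: the simplification $(b-a)(\alpha-\tfrac12)$ for $b_{11}-b_{22}$ is what makes the discriminant take the clean form in the statement, and this relies on carefully tracking the contributions of the diagonal entries of $RD_\alpha$ (which mix $\alpha$ and the cell-size-dependent transmissions). No deeper structural result is needed beyond Lemma \ref{l2-12} and a direct eigenvector construction.
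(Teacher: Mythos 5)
Your proposal is correct, and every computation checks out: the reciprocal transmissions are $\tfrac{n+b-1}{2}$ on $A$ and $\tfrac{n+a-1}{2}$ on $B$, the zero-sum vectors supported on one part give the eigenvalues $\tfrac{\alpha(n+b)-1}{2}$ and $\tfrac{\alpha(n+a)-1}{2}$ with multiplicities $a-1$ and $b-1$, your quotient matrix entries simplify as claimed (indeed $b_{11}=\alpha b+\tfrac{a-1}{2}$, $b_{22}=\alpha a+\tfrac{b-1}{2}$, so $b_{11}+b_{22}=(\alpha+\tfrac12)n-1$ and $b_{11}-b_{22}=(b-a)(\alpha-\tfrac12)$), and the quadratic formula yields the two stated remaining eigenvalues. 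Note that the paper itself gives no proof of this lemma --- it is quoted verbatim from the cited reference \cite{tccu2022} --- so there is no in-paper argument to compare against; your equitable-partition route is the natural one and is consistent with how the paper uses Lemma \ref{l2-12} in Theorems \ref{t2-14} and \ref{t2-15}. The only point worth making explicit in a final write-up is why the three families of eigenvectors exhaust the spectrum: the $(a-1)+(b-1)$ zero-sum eigenvectors span the orthogonal complement of the two-dimensional space of vectors constant on each cell, which is $RD_\alpha$-invariant because the partition is equitable, and the quotient matrix is exactly the restriction of $RD_\alpha$ to that invariant subspace; this upgrades ``contained in the spectrum'' to a complete eigenvalue list with the stated multiplicities.
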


If $G$ is a bipartite graph and $\Delta=n-1$, then $G\cong K_{1,n-1}$.
The spread of $K_{1,n-1}$ can be determined by Lemma \ref{l2-13}, thus
in the following we only consider $\Delta\leq n-2$ for bipartite graphs.

For convenience, we let $t_{v_{i}}=\frac{1}{d_{G}(v_{i})}\sum\limits_{v_{i}v_{j}\in E(G)}RTr_{G}(v_{j})$.

\begin{theorem}\label{t2-14}
Let $G$ be a bipartite graph with $|V(G)|=n\geq 3$, maximum degree $\Delta \leq n-2$, Harary index $H(G)$. Suppose that for $v_{i}\in V(G)$, we have $d_{G}(v_{i})=\Delta$ for $i=1,2,\cdots,k$. Then
$$ S_{RD_{\alpha}}(G)\geq \max_{1\leq i\leq k} \sqrt{(b_{11}+b_{22})^{2}-4(b_{11}b_{22}-b_{12}b_{21})}, $$
where
$$b_{11}=\frac{1}{\Delta+1} \left( \frac{1}{2}(1-\alpha)\Delta (\Delta+3)+\alpha \Delta t_{v_{i}}+\alpha RTr_{G}(v_{i})  \right),$$
$$b_{12}=\frac{1-\alpha}{\Delta+1} \left( \Delta t_{v_{i}}+RTr_{G}(v_{i})-\frac{1}{2}\Delta(\Delta+3)  \right),$$
$$b_{21}=\frac{1-\alpha}{n-\Delta-1} \left( \Delta t_{v_{i}}+RTr_{G}(v_{i})-\frac{1}{2}\Delta(\Delta+3)  \right),$$
$$b_{22}=\frac{1}{n-\Delta-1} \left( \alpha(2H-\Delta t_{v_{i}}-RTr_{G}(v_{i}) )+(1-\alpha)(2H-2\Delta t_{v_{i}}-2RTr_{G}(v_{i})+\frac{1}{2}\Delta(\Delta+3) )  \right).$$
\end{theorem}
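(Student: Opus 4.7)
The plan is to apply the interlacing Lemma~\ref{l2-12} to a two-part partition of $V(G)$ attached to each vertex $v_i$ of maximum degree, and to realise the stated lower bound as the spread of the resulting $2\times 2$ quotient matrix.

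Fix a vertex $v_i$ with $d_G(v_i)=\Delta$ and set $V_1:=N_G[v_i]$, $V_2:=V(G)\setminus V_1$, so $|V_1|=\Delta+1$ and $|V_2|=n-\Delta-1\geq 1$ (this is where $\Delta\leq n-2$ is used). The bipartite hypothesis enters in a crucial place: any two distinct neighbours $u,w\in N_G(v_i)$ lie in the colour class opposite to $v_i$, so they are nonadjacent, yet they share $v_i$ as a common neighbour, forcing $d_G(u,w)=2$ exactly. Consequently the within-block reciprocal distances take only the values $1$ (for edges incident to $v_i$) and $\tfrac12$ (for pairs in $N_G(v_i)$), which is what makes the block sums computable in closed form.

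Next I form the $2\times 2$ quotient matrix $B=(b_{st})$ of $RD_{\alpha}(G)$ for the partition $\{V_1,V_2\}$, where $b_{st}=|V_s|^{-1}\sum_{u\in V_s,\,w\in V_t}(RD_{\alpha}(G))_{u,w}$, and compute the four block sums by a short accounting argument using the two identities $\sum_{u\in V(G)}RTr_G(u)=2H(G)$ and (from the observation above) $d_G(u,w)=2$ for all pairs in $N_G(v_i)$. Namely: (i) the $V_1\times V_1$ block of $RD(G)$ contributes $2\Delta\cdot 1+2\binom{\Delta}{2}\cdot\tfrac12=\tfrac12\Delta(\Delta+3)$, while the diagonal $RT$-part contributes $RTr_G(v_i)+\Delta\,t_{v_i}$ by the definition of $t_{v_i}$; (ii) the $V_1\times V_2$ block of $RD(G)$ equals $\sum_{u\in V_1}RTr_G(u)$ minus the sum from (i), giving $RTr_G(v_i)+\Delta\,t_{v_i}-\tfrac12\Delta(\Delta+3)$, while $RT(G)$ contributes $0$ off-diagonally; (iii) the $V_2\times V_2$ sums follow by subtraction from $2H(G)$ and $\sum_u RTr_G(u)=2H(G)$. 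Weighting by $\alpha$ and $1-\alpha$ and dividing by $|V_1|=\Delta+1$ or $|V_2|=n-\Delta-1$ recovers the four entries $b_{11},b_{12},b_{21},b_{22}$ exactly as stated.

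To finish, Lemma~\ref{l2-12} says the two eigenvalues of the quotient matrix $B$ interlace those of $RD_{\alpha}(G)$, so $\lambda_1(RD_{\alpha}(G))\ge \lambda_1(B)$ and $\lambda_n(RD_{\alpha}(G))\le \lambda_2(B)$. Therefore
$$
S_{RD_{\alpha}}(G)\ \ge\ \lambda_1(B)-\lambda_2(B)\ =\ \sqrt{(\mathrm{tr}\,B)^{2}-4\det B}\ =\ \sqrt{(b_{11}+b_{22})^{2}-4(b_{11}b_{22}-b_{12}b_{21})}.
$$
Since the construction can be carried out for every vertex of degree $\Delta$, we take the maximum over $i=1,\dots,k$ to obtain the theorem. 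The main obstacle is the bookkeeping of the $V_1\times V_2$ and $V_2\times V_2$ block sums; everything else---the Cauchy-type interlacing step and the $2\times 2$ spread formula $\sqrt{(\mathrm{tr}\,B)^2-4\det B}$---is routine.
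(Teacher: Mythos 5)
Your proposal is correct and follows essentially the same route as the paper: partition $V(G)$ into $N[v_i]$ and its complement, form the $2\times 2$ quotient matrix of $RD_{\alpha}(G)$, and apply the interlacing Lemma~\ref{l2-12} to bound the spread below by $\sqrt{(\mathrm{tr}\,B)^2-4\det B}$. Your explicit bookkeeping of the block sums (and your remark that bipartiteness forces $d_G(u,w)=2$ for distinct neighbours of $v_i$) simply fills in details the paper leaves implicit.
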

\begin{proof}
Suppose that $N(v_{i})=\{u_{i_{1}},u_{i_{2}},\cdots,u_{i_{\Delta}}\}$ $(i=1,2,\cdots,k)$ and
$N[v_{i}]=N(v_{i})\bigcup \{v_{i}\}$.
Let $V(G)=\{v_{i},u_{i_{1}},u_{i_{2}},\cdots,u_{i_{\Delta}},w_{1},w_{2},\cdots,w_{n-\Delta-1}\}$.
Then
\begin{equation}
RD_{\alpha}(G)= \left(
   \begin{array}{cc}
    X & (1-\alpha)Y \\
    (1-\alpha)Y^{T} & Z   \notag
   \end{array}
\right),
\end{equation}
where
\begin{equation}
X= \left(
   \begin{array}{ccccc}
    \alpha RTr(v_{i}) & 1-\alpha & 1-\alpha & \cdots & 1-\alpha \\
    1-\alpha & \alpha RTr(u_{i_{1}}) & \frac{1-\alpha}{2} & \cdots & \frac{1-\alpha}{2} \\
    1-\alpha & \frac{1-\alpha}{2} & \alpha RTr(u_{i_{2}}) & \cdots & \frac{1-\alpha}{2} \\
    \vdots & \vdots & \vdots & \ddots & \vdots \\
    1-\alpha & \frac{1-\alpha}{2} & \frac{1-\alpha}{2} & \cdots & \alpha RTr(u_{i_{\Delta}})    \notag
   \end{array}
\right)
\end{equation}

\begin{equation}
Y= \left(
   \begin{array}{ccccc}
    \frac{1}{d(v_{i},w_{1})} & \frac{1}{d(v_{i},w_{2})} & \frac{1}{d(v_{i},w_{3})} & \cdots & \frac{1}{d(v_{i},w_{n-\Delta-1})} \\
    \frac{1}{d(u_{i_{1}},w_{1})} & \frac{1}{d(u_{i_{1}},w_{2})} & \frac{1}{d(u_{i_{1}},w_{3})} & \cdots & \frac{1}{d(u_{i_{1}},w_{n-\Delta-1})} \\
    \frac{1}{d(u_{i_{2}},w_{1})} & \frac{1}{d(u_{i_{2}},w_{2})} & \frac{1}{d(u_{i_{2}},w_{3})} & \cdots & \frac{1}{d(u_{i_{2}},w_{n-\Delta-1})} \\
    \vdots & \vdots & \vdots & \ddots & \vdots \\
    \frac{1}{d(u_{i_{\Delta}},w_{1})} & \frac{1}{d(u_{i_{\Delta}},w_{2})} & \frac{1}{d(u_{i_{\Delta}},w_{3})} & \cdots & \frac{1}{d(u_{i_{\Delta}},w_{n-\Delta-1})}    \notag
   \end{array}
\right)
\end{equation}

\begin{equation}
Z= \left(
   \begin{array}{ccccc}
    \alpha RTr(w_{1}) & \frac{1-\alpha}{d(w_{1},w_{2})} & \frac{1-\alpha}{d(w_{1},w_{3})} & \cdots & \frac{1-\alpha}{d(w_{1},w_{n-\Delta-1})} \\
    \frac{1-\alpha}{d(w_{2},w_{1})} & \alpha RTr(w_{2}) & \frac{1-\alpha}{d(w_{2},w_{3})} & \cdots & \frac{1-\alpha}{d(w_{2},w_{n-\Delta-1})} \\
    \frac{1-\alpha}{d(w_{3},w_{1})} & \frac{1-\alpha}{d(w_{3},w_{2})} & \alpha RTr(w_{3}) & \cdots & \frac{1-\alpha}{d(w_{3},w_{n-\Delta-1})} \\
    \vdots & \vdots & \vdots & \ddots & \vdots \\
    \frac{1-\alpha}{d(w_{n-\Delta-1},w_{1})} & \frac{1-\alpha}{d(w_{n-\Delta-1},w_{2})} & \frac{1-\alpha}{d(w_{n-\Delta-1},w_{3})} & \cdots & \alpha RTr(w_{n-\Delta-1})    \notag
   \end{array}
\right)
\end{equation}
Then the quotient matrix $B$ of matrix $RD_{\alpha}(G)$ is
\begin{equation}
B= \left(
   \begin{array}{cc}
    b_{11} & b_{12} \\
    b_{21} & b_{22}   \notag
   \end{array}
\right),
\end{equation}
where
$$b_{11}=\frac{1}{\Delta+1} \left( \frac{1}{2}(1-\alpha)\Delta (\Delta+3)+\alpha \Delta t_{v_{i}}+\alpha RTr_{G}(v_{i})  \right),$$
$$b_{12}=\frac{1-\alpha}{\Delta+1} \left( \Delta t_{v_{i}}+RTr_{G}(v_{i})-\frac{1}{2}\Delta(\Delta+3)  \right),$$
$$b_{21}=\frac{1-\alpha}{n-\Delta-1} \left( \Delta t_{v_{i}}+RTr_{G}(v_{i})-\frac{1}{2}\Delta(\Delta+3)  \right),$$
$$b_{22}=\frac{1}{n-\Delta-1} \left( \alpha(2H-\Delta t_{v_{i}}-RTr_{G}(v_{i}) )+(1-\alpha)(2H-2\Delta t_{v_{i}}-2RTr_{G}(v_{i})+\frac{1}{2}\Delta(\Delta+3) )  \right).$$
The eigenvalues of the matrix $B$ are
$$ \frac{1}{2}\left( b_{11}+b_{22}\pm \sqrt{(b_{11}+b_{22})^{2}-4(b_{11}b_{22}-b_{12}b_{21})} \right)   .$$
By Lemma \ref{l2-12}, we have
$$ S_{RD_{\alpha}}(G)\geq \max_{1\leq i\leq k} \sqrt{(b_{11}+b_{22})^{2}-4(b_{11}b_{22}-b_{12}b_{21})}.$$
\end{proof}

A clique of graph $G$ is a vertex subset $V_{0}\subseteq V(G)$ such that in $G[V_{0}]$,
the subgraph induced by $V_{0}$, any two vertices are adjacent.
Denote by $\omega(G)$ the clique number, which is the number of vertices in a largest clique of $G$.

Next, we consider the lower bound of $ S_{RD_{\alpha}}(G)$ with given clique number $\omega(G)$.
If $\omega=n$, then $G\cong K_{n}$. One know $S_{RD_{\alpha}}(K_{n})=n(1-\alpha)$. Thus
in the following we only consider $\omega\leq n-1$.

\begin{theorem}\label{t2-15}
Let $G$ be a connected graph with $|V(G)|=n\geq 3$, clique number $2\leq \omega\leq n-1$, Harary index $H(G)$. Suppose that $G_{1},G_{2},\cdots, G_{k}$ are all cliques of $G$ with $\omega$ vertices and  $s_{i}=\sum\limits_{v_{j}\in E(G_{i})}RTr(v_{j})$. Then
$$ S_{RD_{\alpha}}(G)\geq \max_{1\leq i\leq k} \sqrt{(c_{11}+c_{22})^{2}-4(c_{11}c_{22}-c_{12}c_{21})}, $$
where
$$c_{11}=\frac{1}{\omega} \left( \alpha s_{i}+(1-\alpha)\omega(\omega-1)  \right),$$
$$c_{12}=\frac{1-\alpha}{\omega} \left( s_{i}- \omega(\omega-1) \right),$$
$$c_{21}=\frac{1-\alpha}{n-\omega} \left( s_{i}- \omega(\omega-1) \right),$$
$$c_{22}=\frac{1}{n-\omega} \left( \alpha(2H-s_{i})+(1-\alpha)(2H-2s_{i}+\omega(\omega-1))  \right).$$
\end{theorem}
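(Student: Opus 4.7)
The plan is to mirror the quotient-matrix strategy used in the proof of Theorem \ref{t2-14}, but now with the vertex partition induced by a maximum clique rather than by the closed neighborhood of a maximum-degree vertex. For a fixed clique $G_i$ with $V(G_i)=\{v_{j_1},\dots,v_{j_\omega}\}$, I would split $V(G)$ into the two cells $V(G_i)$ and $V(G)\setminus V(G_i)$ of sizes $\omega$ and $n-\omega$, and write $RD_{\alpha}(G)$ in block form
$$RD_{\alpha}(G)=\begin{pmatrix}X & (1-\alpha)Y\\ (1-\alpha)Y^{T} & Z\end{pmatrix},$$
where the key simplification is that every off-diagonal entry of $X$ equals $1-\alpha$, because all pairs of vertices inside the clique are at distance $1$.

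Next I would compute the entries of the $2\times 2$ average-row-sum (quotient) matrix $C=(c_{pq})$. The two bookkeeping identities I need are
$$\sum_{v\in V(G_i)}\sum_{u\notin V(G_i)}\frac{1}{d(v,u)}=s_{i}-\omega(\omega-1),$$
which follows from splitting $s_i=\sum_{v\in V(G_i)}RTr(v)$ into contributions from inside and outside the clique, and
$$\sum_{\substack{w,w'\notin V(G_i)\\ w\neq w'}}\frac{1}{d(w,w')}=2H-2s_{i}+\omega(\omega-1),$$
obtained by subtracting the clique-to-clique total $\omega(\omega-1)$ and twice the clique-to-outside cross total from $2H$. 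With these in hand, the row-sum within $V(G_i)$ gives $c_{11}=\frac{1}{\omega}\bigl(\alpha s_i+(1-\alpha)\omega(\omega-1)\bigr)$ and the cross row-sum gives $c_{12}=\frac{1-\alpha}{\omega}(s_i-\omega(\omega-1))$; symmetrically one obtains $c_{21}$ and $c_{22}$ as stated.

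Having obtained $C$, I would solve its $2\times 2$ characteristic polynomial $\lambda^{2}-(c_{11}+c_{22})\lambda+(c_{11}c_{22}-c_{12}c_{21})=0$, whose two eigenvalues differ by exactly $\sqrt{(c_{11}+c_{22})^{2}-4(c_{11}c_{22}-c_{12}c_{21})}$. By Lemma \ref{l2-12}, the eigenvalues of $C$ interlace those of $RD_{\alpha}(G)$, so the larger eigenvalue of $C$ lies below $\lambda_1(RD_{\alpha}(G))$ and the smaller eigenvalue of $C$ lies above $\lambda_n(RD_{\alpha}(G))$; subtracting yields the desired lower bound on $S_{RD_{\alpha}}(G)$. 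Finally, since the clique $G_i$ was arbitrary among the $k$ maximum cliques, I would take the maximum over $1\le i\le k$.

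I do not expect a conceptual obstacle here, since the argument is structurally identical to Theorem \ref{t2-14}; the only real hazard is the combinatorial accounting in the two identities above for the cross-block and outside-block reciprocal-distance totals in terms of $s_i$, $\omega(\omega-1)$, and $2H$. Once these are verified, assembling $c_{11},c_{12},c_{21},c_{22}$ and invoking interlacing is routine.
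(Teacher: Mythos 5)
Your proposal is correct and follows essentially the same route as the paper: the same block decomposition of $RD_{\alpha}(G)$ with respect to the partition $\{V(G_i),\,V(G)\setminus V(G_i)\}$, the same computation of the $2\times2$ quotient matrix entries (your two bookkeeping identities check out), and the same appeal to the interlacing result of Lemma \ref{l2-12} followed by maximizing over the $k$ cliques of size $\omega$.
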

\begin{proof}
Suppose that $V(G_{i})=\{u_{i_{1}},u_{i_{2}},\cdots,u_{i_{\omega}}\}$ $(i=1,2,\cdots,k)$ and

$V(G)=\{w_{1},w_{2},\cdots,w_{n-\omega}\}\bigcup V(G_{i})$.
Then
\begin{equation}
RD_{\alpha}(G)= \left(
   \begin{array}{cc}
    X & (1-\alpha)Y \\
    (1-\alpha)Y^{T} & Z   \notag
   \end{array}
\right),
\end{equation}

where
\begin{equation}
X= \left(
   \begin{array}{ccccc}
    \alpha RTr(u_{i_{1}}) & 1-\alpha & 1-\alpha & \cdots & 1-\alpha \\
    1-\alpha & \alpha RTr(u_{i_{2}}) & 1-\alpha & \cdots & 1-\alpha \\
    1-\alpha & 1-\alpha & \alpha RTr(u_{i_{3}}) & \cdots & 1-\alpha \\
    \vdots & \vdots & \vdots & \ddots & \vdots \\
    1-\alpha & 1-\alpha & 1-\alpha & \cdots & \alpha RTr(u_{i_{\omega}})    \notag
   \end{array}
\right)
\end{equation}

\begin{equation}
Y= \left(
   \begin{array}{ccccc}
    \frac{1}{d(u_{i_{1}},v_{1})} & \frac{1}{d(u_{i_{1}},v_{2})} & \frac{1}{d(u_{i_{1}},v_{3})} & \cdots & \frac{1}{d(u_{i_{1}},v_{n-\omega})} \\
    \frac{1}{d(u_{i_{2}},v_{1})} & \frac{1}{d(u_{i_{2}},v_{2})} & \frac{1}{d(u_{i_{2}},v_{3})} & \cdots & \frac{1}{d(u_{i_{2}},v_{n-\omega})} \\
    \frac{1}{d(u_{i_{3}},v_{1})} & \frac{1}{d(u_{i_{3}},v_{2})} & \frac{1}{d(u_{i_{3}},v_{3})} & \cdots & \frac{1}{d(u_{i_{3}},v_{n-\omega})} \\
    \vdots & \vdots & \vdots & \ddots & \vdots \\
    \frac{1}{d(u_{i_{\omega}},v_{1})} & \frac{1}{d(u_{i_{\omega}},v_{2})} & \frac{1}{d(u_{i_{\omega}},v_{3})} & \cdots & \frac{1}{d(u_{i_{\omega}},v_{n-\omega})}    \notag
   \end{array}
\right)
\end{equation}

\begin{equation}
Z= \left(
   \begin{array}{ccccc}
    \alpha RTr(v_{1}) & \frac{1-\alpha}{d(v_{1},v_{2})} & \frac{1-\alpha}{d(v_{1},v_{3})} & \cdots & \frac{1-\alpha}{d(v_{1},v_{n-\omega})} \\
    \frac{1-\alpha}{d(v_{2},v_{1})} & \alpha RTr(v_{2}) & \frac{1-\alpha}{d(v_{2},v_{3})} & \cdots & \frac{1-\alpha}{d(v_{2},v_{n-\omega})} \\
    \frac{1-\alpha}{d(v_{3},v_{1})} & \frac{1-\alpha}{d(v_{3},v_{2})} & \alpha RTr(v_{3}) & \cdots & \frac{1-\alpha}{d(v_{3},v_{n-\omega})} \\
    \vdots & \vdots & \vdots & \ddots & \vdots \\
    \frac{1-\alpha}{d(v_{n-\omega},v_{1})} & \frac{1-\alpha}{d(v_{n-\omega},v_{2})} & \frac{1-\alpha}{d(v_{n-\omega},v_{3})} & \cdots & \alpha RTr(v_{n-\omega})    \notag
   \end{array}
\right)
\end{equation}

Then the quotient matrix $B$ of matrix $RD_{\alpha}(G)$ is
\begin{equation}
B= \left(
   \begin{array}{cc}
    c_{11} & c_{12} \\
    c_{21} & c_{22}   \notag
   \end{array}
\right),
\end{equation}

where
$$c_{11}=\frac{1}{\omega} \left( \alpha s_{i}+(1-\alpha)\omega(\omega-1)  \right),$$
$$c_{12}=\frac{1-\alpha}{\omega} \left( s_{i}- \omega(\omega-1) \right),$$
$$c_{21}=\frac{1-\alpha}{n-\omega} \left( s_{i}- \omega(\omega-1) \right),$$
$$c_{22}=\frac{1}{n-\omega} \left( \alpha(2H-s_{i})+(1-\alpha)(2H-2s_{i}+\omega(\omega-1))  \right).$$

The eigenvalues of the matrix $B$ are
$$ \frac{1}{2}\left( c_{11}+c_{22}\pm \sqrt{(c_{11}+c_{22})^{2}-4(c_{11}c_{22}-c_{12}c_{21})} \right)   .$$

By Lemma \ref{l2-12}, we have
$$ S_{RD_{\alpha}}(G)\geq \max_{1\leq i\leq k} \sqrt{(c_{11}+c_{22})^{2}-4(c_{11}c_{22}-c_{12}c_{21})}.$$
\end{proof}

In \cite{frtr2016}, a way of matrix decomposition was introduced.
Let $M$ be a $n\times n$ symmetric matrix given in (\ref{eq:216}) where block $E\in R^{t\times t}$, block $\gamma\in R^{t\times s}$, block $F\in R^{s\times s}$, block $Q\in R^{s\times s}$ and $n=zs+t$, $z$ denotes number of copies of $F$. Denote by $\sigma(X)$ spectrum of $X$, and $\sigma^{k}(X)$ multiset with $k$ copies of $\sigma(X)$.
\begin{equation}\label{eq:216}
M= \left(
   \begin{array}{ccccc}
    E & \gamma & \gamma & \cdots & \gamma \\
    \gamma^{T} & F & Q & \cdots & Q \\
    \gamma^{T} & Q & F & \cdots & Q \\
    \vdots & \vdots & \vdots & \ddots & \vdots \\
    \gamma^{T} & Q & Q & \cdots & F
   \end{array}
\right)
\end{equation}

\begin{lemma}\label{l2-16}\cite{frtr2016}
Let $M$ be the matrix in $(\ref{eq:216})$. Then

$(i)$ The spectrum $\sigma(F-Q)\subseteq \sigma(M)$ with multiplicity $z-1$, where $z$ denotes number of copies of the block in matrix $M$.

$(ii)$ The spectrum $\sigma(M)\setminus \sigma^{z-1}(F-Q)=\sigma(M^{'})$ is the collection of remaining $s+t$ eigenvalues of $M$, and
\begin{equation}
M^{'}= \left(
   \begin{array}{cc}
    E & \sqrt{z}\gamma \\
   \sqrt{z}\gamma^{T} & F+(z-1)Q   \notag
   \end{array}
\right).
\end{equation}
\end{lemma}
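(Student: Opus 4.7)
The plan is to exploit the $z$-fold symmetry of $M$ under permutations of its identical diagonal $F$-blocks via an orthogonal change of basis that simultaneously block-diagonalizes $M$ into one copy of $M'$ together with $z-1$ copies of $F-Q$. Pick any $z\times z$ orthogonal matrix $U=(u_1\mid u_2\mid\cdots\mid u_z)$ whose first column is $u_1=\tfrac{1}{\sqrt z}(1,1,\ldots,1)^T$, so that $u_2,\ldots,u_z$ form an orthonormal basis of the hyperplane $\{y\in\mathbb{R}^z:\sum_j y_j=0\}$, and form the orthogonal similarity
\[
P=\begin{pmatrix} I_t & 0 \\ 0 & U\otimes I_s \end{pmatrix}.
\]

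Writing a vector in the new coordinates as $(x,w_1,w_2,\ldots,w_z)$ with $x\in\mathbb{R}^t$ and $w_k\in\mathbb{R}^s$, the corresponding original vector has first $t$ entries $x$ and $j$-th block $y_j=\sum_k (u_k)_j w_k$. I would then apply $M$ using its defining block structure: the top component outputs $Ex+\gamma\sum_j y_j$ and the $j$-th lower block outputs $\gamma^T x+(F-Q)y_j+Q\sum_\ell y_\ell$. Using the two elementary identities $\sum_j (u_k)_j=\sqrt{z}\,\delta_{k,1}$ and $\sum_j (u_k)_j (u_{k'})_j=\delta_{k,k'}$, one finds $\sum_j y_j=\sqrt{z}\,w_1$, and projecting each output back onto the mode $u_k$ yields $\sqrt{z}\,\gamma^T x+(F+(z-1)Q)w_1$ in mode $k=1$ and $(F-Q)w_k$ in each mode $k\geq 2$.

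Hence $P^T M P$ is block-diagonal with one $(t+s)\times(t+s)$ block equal to $M'$, acting on the coordinates $(x,w_1)$, together with $z-1$ identical $s\times s$ blocks equal to $F-Q$, one per remaining mode. This establishes both parts: $\sigma(F-Q)$ appears with multiplicity $z-1$ in $\sigma(M)$, giving (i), and the remaining $s+t$ eigenvalues of $M$ are exactly those of $M'$, giving (ii); the dimension count $t+s+(z-1)s=zs+t=n$ confirms that all eigenvalues are accounted for. The only place calling for care is the tensor-product bookkeeping on $\mathbb{R}^z\otimes\mathbb{R}^s$, namely checking that the action of $M$ keeps each mode $u_k$ invariant; but because $F$, $Q$, and $\gamma$ are the same in every block, this invariance is immediate from the orthonormality of $U$, so no serious obstacle arises.
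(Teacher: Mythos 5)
Your proof is correct and complete. The paper itself states Lemma \ref{l2-16} as a citation to Fritscher and Trevisan and gives no proof, and your argument --- the orthogonal similarity by $P=\mathrm{diag}(I_t,\,U\otimes I_s)$ with $u_1=\tfrac{1}{\sqrt z}\mathbf{1}$, which block-diagonalizes the lower-right part $I_z\otimes(F-Q)+J_z\otimes Q$ into $F+(z-1)Q$ plus $z-1$ copies of $F-Q$ and sends the coupling blocks to $\sqrt{z}\,\gamma$ in the symmetric mode only --- is exactly the standard symmetry argument underlying the cited result; the mode-projection identities you use all check out.
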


By Lemma \ref{l2-16}, we have that $\sigma(M)=\sigma^{z-1}(F-Q)\bigcup \sigma(M^{'})$.

Denote by $S_{m,n}$ the double star tree with $m+n+2$ vertices, which is obtained from
star $K_{1,m}$ and $K_{1,n}$ by connecting their root vertices.
In the following we determined the $RD_{\alpha}$-spectrum of $S_{m,n}$ by Lemma \ref{l2-16}.

\begin{theorem}\label{t2-17}
The $RD_{\alpha}$-spectrum of the double star tree $S_{m,n}$ is
$$ \{ \left( \alpha(\frac{1}{2}n+\frac{1}{3}m+1)-\frac{1}{2}(1-\alpha) \right)^{[m-1]},
\left( \alpha(\frac{1}{2}m+\frac{1}{3}n+1)-\frac{1}{2}(1-\alpha) \right)^{[n-1]}, \theta_{1}, \theta_{2}, \theta_{3}, \theta_{4}   \},   $$
where $\theta_{i}$ $(i=1,2,3,4)$ are the eigenvalues of $U^{**}$, and
\begin{equation}
U^{**}= \left(
   \begin{array}{cccc}
    \alpha(m+\frac{1}{2}n+1) & 1-\alpha & \frac{1}{2}(1-\alpha)\sqrt{n} & (1-\alpha)\sqrt{m}\\
    1-\alpha & \alpha(n+\frac{1}{2}m+1) & (1-\alpha)\sqrt{n} & \frac{1}{2}(1-\alpha)\sqrt{m}\\
    \frac{1}{2}(1-\alpha)\sqrt{n} & (1-\alpha)\sqrt{n} & l_{1} & \frac{1}{3}(1-\alpha)\sqrt{mn}\\
    (1-\alpha)\sqrt{m} & \frac{1}{2}(1-\alpha)\sqrt{m} & \frac{1}{3}(1-\alpha)\sqrt{mn} & l_{2}   \notag
   \end{array}
\right).
\end{equation}
$l_{1}=\alpha(\frac{1}{2}n+\frac{1}{3}m+1)+\frac{1}{2}(1-\alpha)(n-1)$,
$l_{2}=\alpha(\frac{1}{2}m+\frac{1}{3}n+1)+\frac{1}{2}(1-\alpha)(m-1)$.
\end{theorem}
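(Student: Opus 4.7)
The plan is to exploit the two symmetry classes of $S_{m,n}$ --- the $m$ pendants attached to one center and the $n$ pendants attached to the other --- by applying Lemma \ref{l2-16} twice. Let $u, v$ be the two central vertices, $u_1,\dots,u_m$ the pendants at $u$ and $v_1,\dots,v_n$ the pendants at $v$. A direct inspection of the tree gives the distances $d(u,v)=d(u,u_i)=d(v,v_j)=1$, $d(u,v_j)=d(v,u_i)=d(u_i,u_{i'})=d(v_j,v_{j'})=2$ and $d(u_i,v_j)=3$; summing reciprocals yields $RTr(u)=m+\tfrac{n}{2}+1$, $RTr(v)=n+\tfrac{m}{2}+1$, $RTr(u_i)=1+\tfrac{m}{2}+\tfrac{n}{3}$ and $RTr(v_j)=1+\tfrac{n}{2}+\tfrac{m}{3}$.

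I would then order the vertices as $u, v, v_1, \dots, v_n, u_1, \dots, u_m$ and write $RD_{\alpha}(S_{m,n})$ as a block matrix of the shape (\ref{eq:216}) with respect to the $m$ identical rows and columns indexed by the $u$-pendants: here $E$ is the $(n+2)\times(n+2)$ principal submatrix indexed by $\{u, v, v_1, \dots, v_n\}$, the repeated scalar blocks are $F = \alpha RTr(u_i)$ and $Q = \tfrac{1-\alpha}{2}$ (since any two $u$-pendants are at distance $2$), and $\gamma$ records the $RD_{\alpha}$-entries from a $u$-pendant to the vertices indexing $E$. Lemma \ref{l2-16}(i) then produces the eigenvalue $F - Q = \alpha(1 + \tfrac{m}{2} + \tfrac{n}{3}) - \tfrac{1-\alpha}{2}$ with multiplicity $m - 1$, and part (ii) reduces the remaining spectrum to that of an $(n+3)\times(n+3)$ matrix $M_1'$ in which the $u$-pendants have been replaced by a single diagonal entry $F + (m-1)Q$ and the coupling vector $\gamma$ has been scaled by $\sqrt{m}$. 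Since every $v_j$ is at distance $3$ from every $u_i$, the newly created condensed row and column carry identical entries across $v_1, \dots, v_n$, so $M_1'$ still has the form (\ref{eq:216}) relative to the $v$-pendants (now with a $3\times 3$ block $E$ indexed by $\{u, v, \text{condensed } u\text{-pendant}\}$). A second application of Lemma \ref{l2-16} extracts the eigenvalue $\alpha(1 + \tfrac{n}{2} + \tfrac{m}{3}) - \tfrac{1-\alpha}{2}$ with multiplicity $n - 1$ and compresses $M_1'$ to a $4\times 4$ matrix whose rows and columns correspond to the equitable partition $\{u\},\{v\},\{v_1,\dots,v_n\},\{u_1,\dots,u_m\}$.

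Finally I would read off the entries of this $4\times 4$ matrix. The diagonal entries of the two singleton classes are $\alpha RTr(u)$ and $\alpha RTr(v)$; the two remaining diagonal entries are the condensed values $\alpha RTr(v_j) + (n-1)\tfrac{1-\alpha}{2} = l_1$ and $\alpha RTr(u_i) + (m-1)\tfrac{1-\alpha}{2} = l_2$. The off-diagonal entries are the $RD_{\alpha}$-entries multiplied by the $\sqrt{m}$ or $\sqrt{n}$ factor introduced at the corresponding step; in particular the $(3,4)$-entry picks up both scalings, producing $\sqrt{m}\cdot\sqrt{n}\cdot\tfrac{1-\alpha}{d(u_i,v_j)}=\tfrac{1}{3}(1-\alpha)\sqrt{mn}$. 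Matching all the entries against the given matrix $U^{**}$ identifies the remaining four eigenvalues as $\theta_1,\theta_2,\theta_3,\theta_4$.

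The step that requires the most care is the preservation of the block structure (\ref{eq:216}) after the first compression: one must verify that the new condensed row/column does not spoil the "identical rows" property needed for $v_1,\dots,v_n$, which is exactly the observation that $d(u_i, v_j)=3$ is independent of $i$ and $j$. Once this is checked, the rest is a bookkeeping exercise in tracking how the successive $\sqrt{\,\cdot\,}$ normalizations of Lemma \ref{l2-16} combine multiplicatively to give the $\sqrt{mn}$ factor in the $(3,4)$-entry of $U^{**}$.
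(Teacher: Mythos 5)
Your proof is correct and follows essentially the same route as the paper: two successive applications of Lemma \ref{l2-16}, one for each pendant class, reducing the spectrum to that of the $4\times 4$ condensed matrix $U^{**}$ (the paper compresses the pendants at $v$ first and then permutes rows and columns to restore the block form before the second compression, but the order is immaterial). One remark: the multiplicities you obtain, namely $m-1$ copies of $\alpha(\frac{1}{2}m+\frac{1}{3}n+1)-\frac{1}{2}(1-\alpha)$ and $n-1$ copies of $\alpha(\frac{1}{2}n+\frac{1}{3}m+1)-\frac{1}{2}(1-\alpha)$, agree with the paper's own derivation but are swapped relative to the exponents printed in the theorem statement, which appears to be a typographical error in the statement rather than a flaw in your argument.
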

\begin{proof}
For the double star tree $S_{m,n}$, let $V(K_{1,m})=\{u,x_{1},x_{2},\cdots,x_{m}\}$ and
$V(K_{1,n})=\{v,y_{1},y_{2},\cdots,y_{n}\}$, where $u$ (resp., $v$) is the root vertices of $V(K_{1,m})$ (resp., $V(K_{1,n})$). Then $V(S_{m,n})=\{u,v, x_{1},x_{2},\cdots,x_{m},y_{1},y_{2},\cdots,y_{n}\}$.

It is obvious that $RTr_{G}(u)=m+\frac{1}{2}n+1$, $RTr_{G}(v)=n+\frac{1}{2}m+1$,
$RTr_{G}(x_{i})=\frac{1}{2}m+\frac{1}{3}n+1$ for $i=1,2,\cdots,m$ and $RTr_{G}(y_{i})=\frac{1}{2}n+\frac{1}{3}m+1$ for $i=1,2,\cdots,n$.
We research matrix in the order of $\{u,v,x_{1},x_{2},\cdots,x_{m},y_{1},y_{2},\cdots,y_{n}\}$.

The $RD_{\alpha}$ matrix of $S_{m,n}$ can be written as
\begin{equation}
RD_{\alpha}(S_{m,n})= \left(
   \begin{array}{ccccc}
    E & \gamma & \gamma & \cdots & \gamma \\
    \gamma^{T} & \alpha(\frac{1}{2}n+\frac{1}{3}m+1) & \frac{1}{2}(1-\alpha) & \cdots & \frac{1}{2}(1-\alpha) \\
    \gamma^{T} & \frac{1}{2}(1-\alpha) & \alpha(\frac{1}{2}n+\frac{1}{3}m+1) & \cdots & \frac{1}{2}(1-\alpha) \\
    \vdots & \vdots & \vdots & \ddots & \vdots \\
    \gamma^{T} & \frac{1}{2}(1-\alpha) & \frac{1}{2}(1-\alpha) & \cdots & \alpha(\frac{1}{2}n+\frac{1}{3}m+1)     \notag
   \end{array}
\right),
\end{equation}

where
\begin{equation}
E= \left(
   \begin{array}{ccccc}
    \alpha(m+\frac{1}{2}n+1) & 1-\alpha & 1-\alpha & \cdots & 1-\alpha \\
    1-\alpha & \alpha(n+\frac{1}{2}m+1) & \frac{1}{2}(1-\alpha) & \cdots & \frac{1}{2}(1-\alpha) \\
    1-\alpha & \frac{1}{2}(1-\alpha) & \alpha(\frac{1}{2}m+\frac{1}{3}n+1) & \cdots & \frac{1}{2}(1-\alpha) \\
    \vdots & \vdots & \vdots & \ddots & \vdots \\
    1-\alpha & \frac{1}{2}(1-\alpha) & \frac{1}{2}(1-\alpha) & \cdots & \alpha(\frac{1}{2}m+\frac{1}{3}n+1)     \notag
   \end{array}
\right)
\end{equation}

and $$\gamma=\left( \frac{1}{2}(1-\alpha),1-\alpha,\frac{1}{3}(1-\alpha),\cdots, \frac{1}{3}(1-\alpha) \right)^{T}.$$

Since $F=[\alpha(\frac{1}{2}n+\frac{1}{3}m+1)]$, $Q=[\frac{1}{2}(1-\alpha)]$ and $z=n$, by Lemma \ref{l2-16}, we have

$\sigma(RD_{\alpha}(S_{m,n}))=\sigma^{n-1}(F-Q)\bigcup \sigma(M^{'})=\sigma^{n-1}([\alpha(\frac{1}{2}n+\frac{1}{3}m+1)-\frac{1}{2}(1-\alpha)])\bigcup \sigma(M^{'})$, where
\begin{equation}
M^{'}= \left(
   \begin{array}{cc}
    E & \sqrt{n}\gamma \\
   \sqrt{n}\gamma^{T} & \alpha(\frac{1}{2}n+\frac{1}{3}m+1)+\frac{1}{2}(m-1)(n-1)   \notag
   \end{array}
\right)_{(m+3)\times (m+3) }.
\end{equation}

First we swap the third column with the last column of the matrix $M^{'}$,
then we swap the third row with the last row.
We can obtain a matrix $U^{*}\sim M^{'}$, i.e., $U^{*}$ and $M^{'}$ are similar, where

\begin{equation}
U^{*}= \left(
   \begin{array}{ccccc}
    E^{*} & \gamma^{*} & \gamma^{*} & \cdots & \gamma^{*} \\
    (\gamma^{*})^{T} & F^{*} & Q^{*} & \cdots & Q^{*} \\
    (\gamma^{*})^{T} & Q^{*} & F^{*} & \cdots & Q^{*} \\
    \vdots & \vdots & \vdots & \ddots & \vdots \\
    (\gamma^{*})^{T} & Q^{*} & Q^{*} & \cdots & F^{*}     \notag
   \end{array}
\right),
\end{equation}

where
\begin{equation}
E^{*}= \left(
   \begin{array}{ccc}
    \alpha(m+\frac{1}{2}n+1) & 1-\alpha & \frac{1}{2}(1-\alpha)\sqrt{n}  \\
    1-\alpha & \alpha(n+\frac{1}{2}m+1) & (1-\alpha)\sqrt{n}  \\
    \frac{1}{2}(1-\alpha)\sqrt{n} & (1-\alpha)\sqrt{n} & \alpha(\frac{1}{2}n+\frac{1}{3}m+1)+\frac{1}{2}(1-\alpha)(n-1)      \notag
   \end{array}
\right)
\end{equation}

and $$\gamma^{*}=\left( 1-\alpha,\frac{1}{2}(1-\alpha),\frac{1}{3}(1-\alpha)\sqrt{n} \right)^{T}.$$

Since $F^{*}=[\alpha(\frac{1}{2}m+\frac{1}{3}n+1)]$, $Q^{*}=[\frac{1}{2}(1-\alpha)]$ and $z^{*}=m$, by Lemma \ref{l2-16}, we have

$\sigma(M^{'})=\sigma^{m-1}(F^{*}-Q^{*})\bigcup \sigma(U^{**})=\sigma^{m-1}([\alpha(\frac{1}{2}m+\frac{1}{3}n+1)-\frac{1}{2}(1-\alpha)])\bigcup \sigma(U^{**})$, where
\begin{equation}
M^{'}= \left(
   \begin{array}{cc}
    E & \sqrt{n}\gamma \\
   \sqrt{n}\gamma^{T} & \alpha(\frac{1}{2}n+\frac{1}{3}m+1)+\frac{1}{2}(m-1)(n-1)   \notag
   \end{array}
\right)_{(m+3)\times (m+3) }.
\end{equation}

\begin{equation}
U^{**}= \left(
   \begin{array}{cccc}
    \alpha(m+\frac{1}{2}n+1) & 1-\alpha & \frac{1}{2}(1-\alpha)\sqrt{n} & (1-\alpha)\sqrt{m}\\
    1-\alpha & \alpha(n+\frac{1}{2}m+1) & (1-\alpha)\sqrt{n} & \frac{1}{2}(1-\alpha)\sqrt{m}\\
    \frac{1}{2}(1-\alpha)\sqrt{n} & (1-\alpha)\sqrt{n} & l_{1} & \frac{1}{3}(1-\alpha)\sqrt{mn}\\
    (1-\alpha)\sqrt{m} & \frac{1}{2}(1-\alpha)\sqrt{m} & \frac{1}{3}(1-\alpha)\sqrt{mn} & l_{2}   \notag
   \end{array}
\right).
\end{equation}
$l_{1}=\alpha(\frac{1}{2}n+\frac{1}{3}m+1)+\frac{1}{2}(1-\alpha)(n-1)$,
$l_{2}=\alpha(\frac{1}{2}m+\frac{1}{3}n+1)+\frac{1}{2}(1-\alpha)(m-1)$.

Thus the $RD_{\alpha}$-spectrum of the double star tree $S_{m,n}$ is
$$ \{ \left( \alpha(\frac{1}{2}n+\frac{1}{3}m+1)-\frac{1}{2}(1-\alpha) \right)^{[m-1]},
\left( \alpha(\frac{1}{2}m+\frac{1}{3}n+1)-\frac{1}{2}(1-\alpha) \right)^{[n-1]}, \theta_{1}, \theta_{2}, \theta_{3}, \theta_{4}   \},   $$
where $\theta_{i}$ $(i=1,2,3,4)$ are the eigenvalues of $U^{**}$
\end{proof}

\end{document}